\setlist[itemize]{topsep=.1in,itemsep=.1in}
\setlist[enumerate]{topsep=.1in,itemsep=.1in}
\setlist[enumerate,1]{label={(\roman*)}}
\setlist[enumerate,2]{label={(\alph*)}}
\setlist[enumerate,3]{label={(\arabic*)}}
\setlist[itemize]{nolistsep,noitemsep, topsep=0pt}
\newcommand{\mc}[1]{\mathcal{#1}}%
\newcommand{\Prob}[1]{\ensuremath{\mathbb P\left[#1\right]}}
\DeclarePairedDelimiter{\brackets}{[}{]}
\theoremstyle{plain}
\newtheorem{thm}{Theorem}[section]
\newtheorem{lem}[thm]{Lemma}
\newtheorem{proposition}[thm]{Proposition}
\newtheorem{cor}[thm]{Corollary}
\newtheorem{conj}[thm]{Conjecture}
\newenvironment{proofclaim}[1][\proofname]
{\proof[#1]}
{\endproof}
\declaretheorem[
  style=plain,
  name=Claim,
  within=theorem,
]{claim}
\newenvironment{nonlateproof}[1]
 {%
  \begin{proof}%
 }
 {\end{proof}}
\theoremstyle{plain} 
\newcommand{\thistheoremname}{}
\newtheorem{genericthm}{\thistheoremname}
\theoremstyle{definition}
\newtheorem{definition}[thm]{Definition}
\crefname{equation}{}{}
\crefname{lem}{Lemma}{Lemmas}
\crefname{claim}{Claim}{Claims}
\crefname{thm}{Theorem}{Theorems}
\crefname{enumi}{}{}
\title{Fractional Clique Decompositions of Dense Hypergraphs}
\author{
Michelle Delcourt
\thanks{Department of Mathematics, Toronto Metropolitan University (formerly named Ryerson University),
Toronto, Ontario M5B 2K3, Canada {\tt mdelcourt@torontomu.ca}. Research supported by NSERC under Discovery Grant No. 2019-04269 and a Sloan Research Fellowship.} \and 
Thomas Lesgourgues
\thanks{Combinatorics and Optimization Department,
University of Waterloo, Waterloo, Ontario N2L 3G1, Canada {\tt tlesgourgues@uwaterloo.ca}.} \and
Luke Postle
\thanks{Combinatorics and Optimization Department,
University of Waterloo, Waterloo, Ontario N2L 3G1, Canada {\tt lpostle@uwaterloo.ca}. Partially supported by NSERC
under Discovery Grant No. 2019-04304.}}
\date{\today}
\begin{document}

\maketitle
\begin{abstract}
In 2014, Keevash famously proved the existence of $(n,q,r)$-Steiner systems as part of settling the Existence Conjecture of Combinatorial Designs (dating from the mid-1800s). In 2020, Glock, K\"uhn, and Osthus conjectured a minimum degree generalization: specifically that minimum $(r-1)$-degree at least $(1-\frac{C}{q^{r-1}})n$ suffices to guarantee that every sufficiently large $K_q^r$-divisible $r$-uniform hypergraph on $n$ vertices admits a $K_q^r$-decomposition (where $C$ is a constant that is allowed to depend on $r$ but not $q$). 

The best-known progress on this conjecture is from the second proof of the Existence Conjecture by Glock, K\"uhn, Lo, and Osthus in 2016 who showed that $(1-\frac{C}{q^{2r}})n$ suffices. The fractional relaxation of the conjecture is crucial to improving the bound; for that, only the slightly better bound of $(1-\frac{C}{q^{2r-1}})n$ was known due to Barber, K\"uhn, Lo, Montgomery, and Osthus from 2017. 

Our main result is to prove that $(1-\frac{C}{q^{r-1+o(1)}})n$ suffices for the fractional relaxation. Combined with the work of R{\"o}dl, Schacht, Siggers, and Tokushige from 2007, this also shows that such hypergraphs admit approximate $K_q^r$-decompositions. 

\end{abstract}


\section{Introduction}

An \emph{$(n,q,r)$-Steiner system} for integers $n > q > r \geq 1$ is a set $S$ of $q$-subsets of an $n$-set $X$ such that each $r$-subset of $X$ is contained in exactly one element of $S$. From the hypergraph-theoretic perspective, an $(n,q,r)$-Steiner system is equivalent to a decomposition of the edges of $K_n^r$ (the complete $r$-uniform hypergraph on $n$ vertices) into edge-disjoint copies of $K_q^r$ (referred to as cliques). The necessary divisibility conditions for the existence of an $(n, q, r)$-Steiner system are the following: for each $0 \leq i \leq r-1,$ we require $\binom{q-i}{r-i}~|~~\binom{n-i}{r-i}$.

Whether these divisibility conditions suffice for all large enough $n$ was a central open case of a notorious folklore conjecture from the 1800s called the Existence Conjecture of Combinatorial Designs. (More generally, the Existence Conjecture posited the existence of an \emph{$(n,q,r,\lambda)$-design} where each $r$-subset is in $\lambda$ elements of $S$ for some $\lambda\ge 1$; that said, the $\lambda=1$ case was viewed as the hardest case by far.) In 1847, in one of the oldest theorems of design theory, Kirkman~\cite{K47} proved the existence of $(n,3,2)$-Steiner systems. In the 1970s, Wilson~\cite{W72-EC1, W72-EC2, W75-EC3} revolutionized design theory when he proved the Existence Conjecture for $r=2$ (the graph case). In 2014, Keevash ushered in a new era of design theory when he proved the Existence Conjecture in full. 

A central open question in design theory (and arguably \emph{the} central open question in extremal design theory) is Nash-Williams' Conjecture on the minimum degree threshold needed to guarantee that a graph admits a $K_3$-decomposition (provided it satisfies the necessary divisibility conditions). Nash-Williams' Conjecture and its generalizations to $K_q$-decompositions and $K_q^r$-decompositions are considered fundamental in the area; indeed they also imply the related \emph{completion problem} for Steiner systems, namely how sparse a \emph{partial} $(n,q,r)$-Steiner system has to be to guarantee that it extends to an $(n,q,r)$-Steiner system. These conjectures can be viewed as generalizations to higher uniformities of foundational results in extremal graph theory: namely of the edge density threshold for the existence of one triangle (Mantel~\cite{mantel} in 1907) or one $K_q$ (Tur\'an~\cite{turan1941extremal} in 1941) for the $0$-uniform case; whereas for the $1$-uniform setting, the central results are the minimum degree threshold of a triangle factor (Corr\'adi and Hajnal~\cite{CH63} in 1963) and a $K_q$-factor (Hajnal and Szemer\'edi~\cite{HS70} in 1970) and the minimum $(r-1)$-degree threshold for matchings (Dirac~\cite{dirac1952some} in 1952) and hypergraph matchings (R\"odl, Ruci\'nski, and Szemer\'edi in 2006~\cite{RRS06} for $r=3$ and 2008~\cite{RRS08} for all $r$).

As to the conjectures, an $r$-uniform hypergraph ($r$-graph for short) $G$ is \emph{$K_q^r$-divisible} if for all $i\in \{0,1,\ldots, r-1\}$, we have that $\binom{q-i}{r-i}~|~|\{e\in G: S\subseteq e\}|$ for all subsets $S$ of $V(G)$ of size $i$. Nash-Williams' Conjecture states that every sufficiently large $K_3$-divisible graph $G$ on $n$ vertices with minimum degree $\delta(G)\ge 3n/4$ admits a $K_3$-decomposition. The (false) folklore generalization of Nash-Williams' conjectured that every sufficiently large $K_q$-divisible graph $G$ on $n$ vertices with $\delta(G)\ge (1-\frac{1}{q+1})n$ admits a $K_q$-decomposition. For hypergraphs, it is natural to consider the \emph{minimum $(r-1)$-degree} of an $r$-graph $G$, denoted $\delta_{r-1}(G)$, which is the minimum over all $(r-1)$-sets $S$ of $G$ of the number of edges of $G$ containing $S$. Glock, K\"uhn, and Osthus~\cite{GKO20Survey} conjectured the following high minimum degree generalization of the Existence Conjecture as follows.

\begin{conj}[Glock, K\"uhn, Osthus~\cite{GKO20Survey} 2020]\label{conj:GKO}
For each integer $r\ge 3$, there exists a constant $C > 0$ such that the following holds for all integers $q > r$ and $n$ sufficiently large: If $G$ is a $K_q^r$-divisible graph on $n$ vertices with $\delta_{r-1}(G)\ge \left(1-\frac{C}{q^{r-1}}\right)n$, then $G$ admits a $K_q^r$-decomposition.
\end{conj}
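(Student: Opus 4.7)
The plan is to reduce the integer $K_q^r$-decomposition problem to its fractional relaxation and concentrate essentially all of the effort there. The reduction has two standard steps. First, by the result of R\"odl, Schacht, Siggers, and Tokushige mentioned in the abstract, a fractional $K_q^r$-decomposition of $G$ together with high minimum $(r-1)$-degree yields an \emph{approximate} $K_q^r$-decomposition: a collection of edge-disjoint cliques covering all but $o(|E(G)|)$ edges. Second, the iterative absorption machinery developed by Barber, K\"uhn, Lo, Montgomery, and Osthus (refined from Keevash's design methods) upgrades an approximate decomposition into an exact one, using absorbers built inside $G$ and the assumed divisibility. Neither reduction costs anything in the minimum-degree threshold, so it suffices to prove the fractional version of \cref{conj:GKO}.

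For the fractional version I would follow the weighted-rebalancing paradigm. Begin with the uniform weighting on all copies of $K_q^r$ in $G$, scaled so that the expected weighted coverage of each edge is exactly $1$. Under the hypothesis $\delta_{r-1}(G)\ge(1-C/q^{r-1})n$ the actual weighted coverage of every edge equals $1$ up to a small additive error $\eta$, by standard hypergraph counting. The remaining task is to redistribute weight across cliques to zero out these deviations. This is accomplished using local \emph{swapping gadgets}: small families of $K_q^r$-copies whose signed weighted combination raises the coverage of a chosen edge $e$, lowers the coverage of another edge $e'$, and leaves every other edge untouched. Since the total deviation to correct is small, an iteration (or, equivalently, solving a sparse linear system of corrections) converges to a valid fractional decomposition.

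The dominant obstacle is the tight counting required to produce these swapping gadgets. Each additional vertex beyond the trivially forced ones degrades the minimum-degree threshold by a further factor of $q$, because extending a gadget amounts to simultaneously finding a common extension of several $(r-1)$-subsets into cliques; to achieve the $1/q^{r-1}$ scaling one must therefore work with gadgets on essentially $2q-r$ vertices. The paper's main result of $(1-C/q^{r-1+o(1)})n$ for the fractional problem indicates that this near-optimal regime is attained, with the $o(1)$ loss plausibly arising either from a logarithmic slack in the concentration arguments for the random initial weighting or from the need for multiple rebalancing rounds with geometrically decreasing error. Closing the gap to the exact $1/q^{r-1}$ bound of \cref{conj:GKO} — which, via the two-step reduction above, would settle the full conjecture — appears to demand either a sharper deterministic construction of minimum-size swapping gadgets or a substantially stronger concentration result at the initial weighting step; this is where I would expect to invest the bulk of the effort, and where I expect the core difficulty of \cref{conj:GKO} to reside.
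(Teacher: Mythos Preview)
The statement you are attempting to prove is an \emph{open conjecture}; the paper does not prove it. The paper's main contribution is Theorem~\ref{thm:Main}, which establishes only the \emph{fractional} relaxation, and even that only up to an additive $\varepsilon$ in the exponent: $\delta^*_{K_q^r}\le 1-C/q^{r-1+\varepsilon}$. So there is no ``paper's own proof'' of \cref{conj:GKO} to compare against.

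Your two-step reduction contains a genuine error. You assert that ``neither reduction costs anything in the minimum-degree threshold,'' but the second step --- upgrading an approximate $K_q^r$-decomposition to an exact one via iterative absorption --- is \emph{not} known to be lossless for $r\ge 3$. The equality $\delta_{K_q}=\delta^*_{K_q}$ has been proved only for graphs. For hypergraphs the best absorption result is Theorem~\ref{thm:GKLO}, which gives $1-C/q^{2r}$ and is itself a bottleneck; this is precisely why the paper can claim only approximate decompositions (Corollary~\ref{cor:Main}) and not exact ones at the $q^{r-1+\varepsilon}$ threshold. Until a hypergraph analogue of $\delta_{K_q^r}=\delta^*_{K_q^r}$ is established, the fractional result does not transfer to the integer conjecture.

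Your proposed attack on the fractional problem --- uniform initial weights plus local swapping gadgets on roughly $2q$ vertices --- is essentially the approach of Barber, K\"uhn, Lo, Montgomery, and Osthus, which is known to stall at $1-C/q^{2r-1}$. The factor of two in the exponent is not a matter of sharper concentration or smaller gadgets: each gadget vertex beyond the first $q$ must lie in the common neighbourhood of $\Theta(q^{r-1})$ many $(r-1)$-sets, and this is what forces the $q^{2r-1}$ denominator. The paper circumvents this entirely with a different architecture: it first shows $K_{rq}^r-e$ has a fractional $K_q^r$-decomposition (Theorem~\ref{thm:MissingOneEdge}), bootstraps this via a quasi-independent sampling distribution to handle $K_n^r$ minus a bounded union of matchings (Corollary~\ref{cor:MissingManyMatchings}), and then samples random $k$-vertex subsets to obtain an almost-decomposition into such ``clique minus matchings'' pieces (Theorem~\ref{thm:AlmostBySampling}). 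None of these ideas appear in your outline, and they are what drive the exponent down from $2r-1$ to $r-1+\varepsilon$.
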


Crucial to the known results and proofs for the conjectures mentioned above is the notion of a \emph{fractional $K_q^r$-decomposition} of an $r$-graph $G$ which is an assignment of non-negative weights to each copy of $K_q^r$ in $G$ such that for each $e\in E(G)$, the sum of the weights of cliques  containing $e$ is exactly $1$. Note the existence of a $K_q^r$-decomposition necessitates the existence of a fractional $K_q^r$-decomposition. Conversely, the seminal work of Haxell and R\"odl~\cite{HR01} from 2001 using Szemer\'edi's Regularity Lemma and R\"odl's nibble method shows that the existence of a fractional $K_q$-decomposition implies the existence of an \emph{approximate} $K_q$-decomposition (that is a \emph{packing} of edge-disjoint $K_q$'s covering a $1-o(1)$ proportion of the edges). More generally, they showed the maximum weight of a fractional $K_q$-packing is approximately equal to the maximum weight of a $K_q$-packing. Their work was generalized to hypergraphs by R{\"o}dl, Schacht, Siggers, and Tokushige~\cite{RSST07} in 2007 using the hypergraph regularity lemma. Indeed, the known decomposition results then use variations of the absorption method to transform approximate decompositions into full decompositions.

To simplify discussion of the known results, it is useful to define the \emph{decomposition threshold of $K_q^r$}, denoted by $\delta_{K_q^r}$, as $\lim\sup_{n\rightarrow \infty} \delta_{K_q^r}(n)/n$ where $\delta_{K_q^r}(n)$ is the smallest integer $d$ such that every $K_q^r$-divisible graph $G$ on $n$ vertices with $\delta_{r-1}(G)\ge d$ has a $K_q^r$-decomposition. Similarly, it is useful to define the \emph{fractional $K_q^r$-decomposition threshold} $\delta^*_{K_q^r}$ as the infimum of all real numbers $c$ such that every $r$-graph $G$ with $\delta_{r-1}(G)\ge c\cdot v(G)$ admits a fractional $K_q^r$-decomposition. 

In a breakthrough result from 2014 using iterative absorption, Barber, K\"uhn, Lo and Osthus showed that $\delta_{K_3}=\delta^*_{K_3}$, thus reducing the asymptotic version of Nash-Williams' Conjecture to its fractional relaxation (a construction of Graham from 1970 shows that Nash-Williams' would be tight and even that $\delta^*_{K_3}\ge 3/4$). The best-known fractional bound is by Delcourt and Postle~\cite{DP2021progress} who showed that $\delta^*_{K_3} \leq\frac{7+\sqrt{21}}{14}<0.82733$, which improved upon earlier progress by Garaschuk~\cite{garaschuk2014linear}, Dross~\cite{dross2016fractional}, and Dukes and Horsley~\cite{dukes_minimum_2020}.

As for $K_q$-decompositions for $q\ge 4$, the best-known decomposition result is by Glock, K\"{u}hn, Lo, Montgomery, Osthus~\cite{GKLMO19} in 2019 who showed that $\delta_{K_q}=\delta^*_{K_q}$ (a construction of Gustavsson~\cite{gustavsson1991decompositions} from 1991 shows that $\delta^*_{K_q}\ge 1-\frac{1}{q+1}$). Surprisingly, the authors and Henderson~\cite{DHLP25Counter} recently proved that the folklore generalization is false for every $q\geq 4$. They proved that for every integer $q\geq 4$, there exists a constant $c>1$ such that $\delta^*_{K_q}\geq 1 - \frac{1}{c\cdot(q+1)}$ (and even that $c$ can be taken to be $\frac{\sqrt{2}+1}{2}-\varepsilon$ for any $\varepsilon>0$ provided $q$ is large enough). The current best-known upper bound on $\delta^*_{K_q}$, and the first and only result to yield a bound whose denominator is linear in $q$, is due to Montgomery~\cite{montgomery2019fractional} who proved that $\delta^*_{K_q}\leq1-\frac{1}{100q}$, after incremental progress by various authors, notably~\cite{BKLMO2017fractional,D12,Y05Fractional}. 

As for $K_q^r$-decompositions for $r\ge 3$, even the order of the polynomial of $q$ in the denominator for fixed $r$ is still open. Keevash's original proof of the Existence Conjecture shows that $\delta^*_{K_q^r} \le 1-\varepsilon$ for some $\varepsilon > 0$ but that $\varepsilon$ is exponentially small in $q$ (namely at most $2^{-q^8}$). The first polynomial bound dates from the second proof of the Existence Conjecture by Glock, K\"uhn, Lo, and Osthus~\cite{GKLO16} from 2016 as follows.

\begin{thm}[Glock, K\"uhn, Lo, and Osthus~\cite{GKLO16} 2016]\label{thm:GKLO}
For each integer $r\ge 3$, there exists a constant $C > 0$ such that the following holds for all integers $q > r$ and $n$ sufficiently large: If $G$ is a $K_q^r$-divisible graph on $n$ vertices with $\delta_{r-1}(G)\ge \left(1-\frac{C}{q^{2r}}\right)n$, then $G$ admits a $K_q^r$-decomposition.   
\end{thm}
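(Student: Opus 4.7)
The plan is to use the by-now standard two-phase approach: first establish the fractional analogue $\delta^*_{K_q^r} \leq 1 - C'/q^{2r}$ for some larger constant $C'$, then upgrade to an exact decomposition by iterative absorption. The approximate decomposition result of R\"odl, Schacht, Siggers, and Tokushige~\cite{RSST07} and the hypergraph vortex/absorption machinery will be used as essentially black boxes; the new content lies in the fractional step.

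For the fractional part, I would assign each $K_q^r$-copy $K$ in $G$ a weight of the form $w(K) = \alpha + \beta_K$, where $\alpha$ is a uniform base chosen so that the ``expected'' per-edge coverage is close to $1$, and $\beta_K$ is a correction term. Under the hypothesis $\delta_{r-1}(G) \geq (1 - C/q^{2r})n$, a sequential extension argument (adding the $q-r$ extra vertices one at a time and applying the $(r-1)$-degree bound to the relevant $(r-1)$-subsets) shows that each edge lies in a controlled fraction of all $\binom{n-r}{q-r}$ potential cliques, so the uniform piece leaves only a small per-edge coverage defect, polynomial in $1/q$. The correction $\beta_K$ is produced by local \emph{swap gadgets}: pairs of small clique configurations across which weight can be shifted by $\pm\eta$, altering the coverage of only a prescribed small set of edges. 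I would then set up an auxiliary flow problem whose vertices are the edges of $G$ and whose arcs record available swaps, and argue that the minimum degree hypothesis yields enough gadgets passing through any individual clique to redistribute the global defect while keeping every $w(K)$ nonnegative.

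For the absorption phase, construct a nested vortex $V(G) = U_0 \supset U_1 \supset \dots \supset U_t$ with $|U_{i+1}|/|U_i|$ a small constant and $|U_t|$ of bounded size. At each level $i < t$, apply \cite{RSST07} to $G[U_i \setminus U_{i+1}]$ using the fractional bound to remove most of its edges, and fix the divisibility of the residue inside $U_{i+1}$ via a pre-reserved ``divisibility fixer.'' Finally, absorb the tiny divisible leftover inside $U_t$ using a pre-reserved $K_q^r$-absorber. Since each level loses only a constant fraction of vertices, the minimum degree condition is preserved through all $t$ levels with a multiplicative constant loss that can be absorbed into $C$.

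The hard step will be the quantitative fractional bound. A single swap gadget contributes flexibility that shrinks polynomially in $q$, while the aggregated initial defect per edge is likewise of polynomial-in-$1/q$ order, and coordinating the global correction so that no clique is pushed to a negative weight is delicate; this is what drives the $q^{2r}$ denominator. Heuristically, the exponent $2r$ splits as $r$ degree slots spent guaranteeing that each $r$-edge extends to a $q$-clique, plus another $r$ spent guaranteeing that each edge has enough ``swap neighbors'' for the correction to flow consistently in the auxiliary network. Sharpening either factor requires a more refined gadget design, which is precisely what subsequent work of Barber, K\"uhn, Lo, Montgomery, and Osthus (for the $q^{2r-1}$ bound) and the present paper (for $q^{r-1+o(1)}$) accomplishes.
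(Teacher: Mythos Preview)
This theorem is not proved in the present paper; it is quoted as prior work of Glock, K\"uhn, Lo, and Osthus~\cite{GKLO16}, cited in the introduction as background leading up to the paper's own main result (Theorem~\ref{thm:Main}). There is therefore no ``paper's own proof'' to compare against.

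As for your sketch itself: the two-phase outline (fractional threshold plus iterative absorption/vortex) is indeed the architecture of~\cite{GKLO16}, so at that level of granularity you are on target. However, the fractional step you describe is too vague to constitute a proof, and in places does not match what is actually done. The ``swap gadget plus auxiliary flow'' picture is closer in spirit to graph-case arguments; the hypergraph fractional bounds in~\cite{GKLO16} and~\cite{BKLMO2017fractional} proceed instead by carefully designed weight functions on cliques (via vertex- and edge-gadgets that shift weight in a controlled way) together with counting arguments, not by solving a flow problem. Your heuristic accounting for why the exponent comes out as $2r$ is also post-hoc rather than derived. If you intend this as an outline, you would need to specify the gadgets concretely, quantify their effect per edge, and show the correction terms stay nonnegative---none of which is present here. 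So the proposal is a reasonable orientation but not a proof.
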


Meanwhile, the fractional decomposition threshold $\delta^*_{K_q^r}$ has been studied in its own right. Improving upon earlier results from 2007 by Yuster~\cite{Y07} and 2012 by Dukes~\cite{D12,D15corrigendum}, the best-known bound is by Barber, K\"{u}hn, Lo, Montgomery, and Osthus~\cite{BKLMO2017fractional} from 2017 proved the following.

\begin{thm}[Barber, K\"{u}hn, Lo, Montgomery, and Osthus~\cite{BKLMO2017fractional} 2017]
For each integer $r\ge 3$, there exists a constant $C > 0$ such that the following holds for every integer $q > r$:  
$$\delta^*_{K_q^r}\leq 1-\frac{C}{q^{2r-1}}.$$
\end{thm}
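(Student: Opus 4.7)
The plan is to construct a fractional $K_q^r$-decomposition of $G$ directly, starting from a uniform ``naive'' weighting of all $K_q^r$-copies in $G$ and iteratively correcting the per-edge defect while preserving non-negativity. Set $\alpha := C/q^{2r-1}$ and let $G$ be an $n$-vertex $r$-graph with $\delta_{r-1}(G) \geq (1-\alpha)n$, where $C$ is a small absolute constant depending only on $r$. For each copy $K$ of $K_q^r$ in $G$, assign the initial weight $w_0(K) = 1/N$ with $N = \binom{n-r}{q-r}$. A routine codegree computation, summing over the $\binom{q}{r} - 1 = O(q^r)$ additional edges of a candidate clique and applying the minimum codegree bound to each, shows that the number $d(e)$ of $K_q^r$-copies through a fixed edge $e$ satisfies $d(e) = N(1 - O(\alpha q^r))$ uniformly in $e$. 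Hence the initial defect $\Delta_0(e) := 1 - d(e)/N$ is $O(\alpha q^r)$ for every edge.

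For the correction phase, I would define an update operator $T$ that at each edge $e$ spreads the current defect uniformly over the $d(e) \approx N$ cliques through $e$. A single application zeros out the defect at $e$ but induces a secondary defect at every other edge sharing a clique with $e$. The key quantitative claim is that, in the $L^\infty$ norm, one round of $T$ shrinks the defect by a factor of roughly $q^{-(r-1)}$, thanks to the near-regularity of $G$ forced by the minimum codegree condition. Iterating $T$ to convergence, the cumulative adjustment at each clique $K$ is a geometric series bounded by $O(\alpha q^r / N) \cdot (1 - c\, q^{-(r-1)})^{-1} = O(\alpha q^{2r-1} / N)$. Choosing $C$ small enough makes this strictly less than the base weight $1/N$, so the final weights remain non-negative and constitute the desired fractional decomposition.

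The principal obstacle is rigorously establishing the per-round contraction rate $q^{-(r-1)}$ for $T$, which is fundamentally a pseudo-randomness statement about the edge-clique bipartite incidence of $G$: one must leverage $\delta_{r-1}(G) \geq (1-\alpha)n$ to show this incidence is close to uniformly regular, e.g.\ via a spectral or switching argument. A secondary difficulty is that defects at different edges are positively correlated (edges sharing many vertices lie in many common cliques), so the naive version of $T$ can over-correct on such edges; handling this cleanly likely requires organizing the update around link hypergraphs of small vertex subsets and tracking defects jointly on the nested $i$-uniform layers for $i = 0, 1, \ldots, r-1$, in the spirit of the proof of \cref{thm:GKLO}. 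The final rate $q^{2r-1}$ is precisely the product of the $q^r$ initial defect scale with the $q^{r-1}$ per-step contraction, and this one-shot cascading correction cannot do better without exploiting additional structure in $G$.
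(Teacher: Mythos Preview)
This theorem is quoted here as a prior result of Barber, K\"uhn, Lo, Montgomery, and Osthus~\cite{BKLMO2017fractional}; the present paper does not supply a proof of it (its own contribution is the stronger \cref{thm:Main}), so there is no argument in the paper to compare your attempt against.

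On the proposal itself, the claimed contraction of $T$ is not merely hard to establish---it does not hold. If at every edge $e$ you add $\Delta(e)/d(e)$ to each clique through $e$, then the induced change in total weight at a fixed edge $f$ is
\[
\sum_{e}\frac{\Delta(e)}{d(e)}\,\bigl|\{K:e,f\subseteq K\}\bigr|
\;=\;\sum_{K\ni f}\ \sum_{e\in K}\frac{\Delta(e)}{d(e)}
\;\approx\; d(f)\cdot\binom{q}{r}\cdot\frac{\Delta_0}{N}
\;\approx\;\binom{q}{r}\,\Delta_0
\]
when the defects and clique-degrees are near-uniform. One round therefore \emph{multiplies} the $L^\infty$ defect by $\Theta(q^r)$ rather than shrinking it by $q^{-(r-1)}$, and the iteration diverges. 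The issue is structural: each clique carries $\binom{q}{r}$ edges, so spreading every edge's defect uniformly over its cliques over-corrects everywhere simultaneously. Your ``secondary difficulty'' (positive correlation of defects across overlapping edges) is in fact the primary obstruction, and no pseudo-randomness of the edge--clique incidence can rescue a purely additive spreading operator. The arguments in~\cite{BKLMO2017fractional} (and the earlier ones of Dukes~\cite{D12,D15corrigendum} and Yuster~\cite{Y07}) instead build the cancellation in explicitly, via weight-shifting gadgets whose net boundary is supported on a prescribed edge, and the exponent $2r-1$ emerges from bounding the cumulative effect of all such gadgets on a single clique. Your heuristic product $q^r\cdot q^{r-1}$ is the right bookkeeping, but the mechanism you attach it to does not deliver it.
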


Note that both the decomposition and fractional decomposition result differ from the conjecture by a factor of two in the exponent. 

The main result of this paper is a resolution of Conjecture~\ref{conj:GKO} up to an additive factor of $\varepsilon$ in the exponent as follows.

\begin{thm}\label{thm:Main}
For every integer $r \geq 3$ and real $\varepsilon\in (0,1]$, there exists a constant $C > 0$ such that the following holds for every integer $q > r$: 
$$\delta^*_{K_q^r}\leq 1-\frac{C}{q^{r-1+\varepsilon}}.$$
\end{thm}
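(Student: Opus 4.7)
The plan is to adapt the iterative fractional-decomposition framework of Barber--K\"uhn--Lo--Montgomery--Osthus and combine it with an induction on the uniformity $r$, using Montgomery's tight bound for $K_q$-decompositions of graphs as the $r=2$ base case. This nested structure is what should convert the $q^{2r-1}$ factor in the BKLMO bound into $q^{r-1+\varepsilon}$: the induction is designed so that each additional level of uniformity contributes one factor of $q$ rather than two.

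Concretely, first assign every copy $K$ of $K_q^r$ in $G$ the uniform weight $w_0(K) = 1/\binom{n-r}{q-r}$. Writing $\eta = C/q^{r-1+\varepsilon}$ for the slack in the hypothesis $\delta_{r-1}(G) \ge (1-\eta)n$, a direct count gives an initial per-edge deficit $D_0(e) := 1 - \sum_{K \ni e} w_0(K)$ of magnitude $O(q\eta)$. The crucial step is then to construct correction weights $\Delta(K)$ with $|\Delta(K)| \ll w_0(K)$, so that the final weights $w_0(K)+\Delta(K)$ stay nonnegative, and with $\sum_{K \ni e} \Delta(K) = D_0(e)$ for every edge $e$. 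Following Montgomery's approach for graphs, I would build $\Delta(K)$ as a sum of elementary ``correction moves'' supported on pairs of cliques sharing an $(r-1)$-face (or short chains of such pairs), indexed by a Markov chain on cliques whose spectral gap is controlled by the minimum degree of the links of $(r-1)$-sets in $G$.

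The main obstacle is establishing the mixing estimate for this Markov chain with the sharp exponent. A direct, non-inductive analysis loses a factor of $q$ at each of the $r-1$ levels of the clique hierarchy, reproducing the BKLMO bound $q^{2r-1}$. To save a factor of $q$ per level, the plan is to use the inductive fractional $K_q^{r-1}$-decomposition of the link of each $(r-1)$-set to design correction moves that are themselves near-uniform on the link, rather than picking them ad hoc; the analog of Montgomery's ``local modification plus random walk'' is then lifted through the link hierarchy. A Cauchy--Schwarz / second-moment argument should control the accumulated error, and it is precisely this step that introduces the $\varepsilon$ slack in the final exponent, since each inductive transfer from uniformity $r'$ to $r'-1$ loses a $q^{\varepsilon/(r-1)}$ factor that cannot be sharpened away with these tools.
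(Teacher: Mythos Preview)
Your proposal is a strategy outline, not a proof, and the gap sits exactly where you flag ``the main obstacle.'' You never specify the Markov chain, never prove a spectral-gap or mixing bound, and never show how a fractional $K_q^{r-1}$-decomposition of a link can be used to manufacture correction moves that save a factor of $q$ at each level. The links of $(r-1)$-sets are just vertex subsets, so Montgomery's graph theorem does not apply to them; the links of vertices are $(r-1)$-graphs, but a decomposition of each vertex link does not in any obvious way assemble into correction moves for $r$-cliques in $G$. The BKLMO weight-shifting framework loses a factor of roughly $q^r$ in passing from the initial uniform weighting to a genuine decomposition precisely because the elementary moves interact across $r$ levels simultaneously; saying that an inductive link decomposition ``should'' decouple these levels is the whole content of the theorem, and you have not supplied it. As the paper's introduction notes, essentially none of Montgomery's argument generalises directly to $r\ge 3$, and an induction-through-links version of it has been the obvious thing to try since 2019 without success.

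For comparison, the paper does \emph{not} induct on $r$ and uses no Markov chains. Its three ingredients are: (i) an explicit inclusion--exclusion computation showing $K_{rq}^r-e$ has a fractional $K_q^r$-decomposition, which via an averaging trick converts any $\binom{rq}{r}^{-1}$-almost fractional $K_{rq}^r$-decomposition into an exact $K_q^r$-decomposition; (ii) a quasi-independent sampling distribution on vertex subsets that avoids a given hypergraph matching yet has the correct marginals on proper subsets of each matching edge, yielding a fractional $K_q^r$-decomposition of $K_{Cq}^r$ minus any matching, and by iteration minus any bounded number of matchings; and (iii) a sampling lemma showing that a random $k$-vertex subset of a $(1-d)$-dense $G$ induces a clique minus $m$ matchings with high probability, analysed via an edge-exploration ordering of the missing link graph. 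The $\varepsilon$ loss in the exponent comes from step~(iii): one needs $m$ to be a large constant depending on $r$ and $\varepsilon$, and the clique size $k$ grows like $C^m q$, which is what forces $d \le q^{-(r-1+\varepsilon)}$ rather than $q^{-(r-1)}$.
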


Combined with the work of R{\"o}dl, Schacht, Siggers, and Tokushige~\cite{RSST07} this yields the following corollary.

\begin{cor}\label{cor:Main}
For every integer $r \geq 3$ and real $\varepsilon\in (0,1]$, there exists a constant $C > 0$ such that the following holds for every integer $q > r$ and $n$ large enough: If $G$ is an $r$-graph with $\delta_{r-1}(G)\ge 1-\frac{C}{q^{r-1+\varepsilon}}$, then $G$ admits a $K_q^r$-packing covering $(1-\varepsilon)\cdot e(G)$ edges of $G$. 
\end{cor}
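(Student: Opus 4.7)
The plan is to derive \cref{cor:Main} by combining \cref{thm:Main} with the approximate packing theorem of R\"odl, Schacht, Siggers, and Tokushige~\cite{RSST07}, which generalizes the Haxell--R\"odl ``fractional implies approximate integral'' theorem to $r$-uniform hypergraphs: for any fixed $r$-graph $F$ and any $\eta>0$, every sufficiently large $r$-graph $G$ contains an integral $F$-packing of size at least the maximum fractional $F$-packing of $G$ minus $\eta\binom{v(G)}{r}$.

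Given $\varepsilon\in(0,1]$, I would first apply \cref{thm:Main} with parameter $\varepsilon$ to obtain a constant $C_0>0$ with $\delta^*_{K_q^r}\le 1-C_0/q^{r-1+\varepsilon}$ for every $q>r$. Setting $C:=C_0/2$ leaves a margin above the infimum defining $\delta^*_{K_q^r}$, so any $r$-graph $G$ on $n$ vertices with $\delta_{r-1}(G)\ge (1-C/q^{r-1+\varepsilon})n$ satisfies $\delta_{r-1}(G)/v(G)>\delta^*_{K_q^r}$ and therefore admits a fractional $K_q^r$-decomposition. In particular the maximum fractional $K_q^r$-packing of $G$ has weight exactly $e(G)$.

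Applying~\cite{RSST07} with $F=K_q^r$ and an error parameter $\eta$ to be chosen later then produces an edge-disjoint $K_q^r$-packing of $G$ covering at least $e(G)-\eta\binom{n}{r}$ edges, provided $n\ge n_0(\eta,q,r)$. The minimum $(r-1)$-degree hypothesis forces $e(G)\ge \tfrac{1}{r}\binom{n}{r-1}\cdot(1-C/q^{r-1+\varepsilon})n$, which is at least $\tfrac{1}{2}\binom{n}{r}$ once $q$ is large relative to $r$ (and is comparable for small $q$, absorbing a factor into $C$). Choosing $\eta:=\varepsilon/2$ and then taking $n$ sufficiently large relative to $\varepsilon$, $q$, and $r$ guarantees $\eta\binom{n}{r}\le \varepsilon\cdot e(G)$, so the packing produced covers at least $(1-\varepsilon)\cdot e(G)$ edges, which is the conclusion of \cref{cor:Main}.

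There is no substantive mathematical obstacle here: \cref{cor:Main} is essentially a direct application of~\cite{RSST07} to the fractional decomposition produced by \cref{thm:Main}. The only care needed is routine bookkeeping, namely absorbing the strict-versus-non-strict gap inherent in the infimum definition of $\delta^*_{K_q^r}$ into the choice of $C$, and ensuring $n_0$ is large enough that the nibble-based error term of~\cite{RSST07} is dominated by $\varepsilon\cdot e(G)$.
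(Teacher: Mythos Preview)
Your proposal is correct and follows precisely the approach the paper indicates: the paper does not give a separate proof of \cref{cor:Main} but simply states that it follows by combining \cref{thm:Main} with the approximate packing result of R\"odl, Schacht, Siggers, and Tokushige~\cite{RSST07}. Your write-up supplies exactly this argument together with the routine bookkeeping (halving the constant to clear the infimum in the definition of $\delta^*_{K_q^r}$ and choosing the nibble error parameter small enough that it is dominated by $\varepsilon\cdot e(G)$).
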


We remark that for lower bounds for Conjecture~\ref{conj:GKO}, Glock, K\"{u}hn, and Osthus observed that one can adapt a construction of Kostochka, Mubayi and Verstra\"{e}te~\cite{KMV14}, itself a blow-up of one by R{\"o}dl and {\v{S}}inajov{\'a}~\cite{RS94}, to prove that for all $q,r,n_0$, there exists a $K_q^r$-free $r$-uniform hypergraph $G$ on $n\geq n_0$ vertices, with $\delta(G)\geq(1-c_r\frac{\log q}{q^{r-1}})n$ and containing not a single copy of $K_q^r$. This trivially implies that $\delta_{K_q^r}^*\geq 1-\Theta_r(\frac{\log q}{q^{r-1}})$. In 2018, Lo and Zhao~\cite{LZ2018codegree}, proved that this bound is asymptotically tight for the existence of one copy of $K_q^r$. It would obviously be interesting to remove that $\log$ factor in the lower bound for $\delta^*_{K_q^r}$, as conjectured by Glock, K\"{u}hn, and Osthus.

\subsection{On Montgomery's Proof and the Difficulties of Generalizing to Hypergraphs}

As to our proof, it is useful to understand Montgomery's proof strategy~\cite{montgomery2019fractional} for showing $\delta^*_{K_q}\leq1-\frac{1}{100q}$ for graphs and the difficulties in generalizing this to hypergraphs. Essentially none of Montgomery's proof generalizes immediately to hypergraphs (which is presumably why there has been no subsequent progress on improving $\delta^*_{K_q^r}$) and indeed, even how to generalize the strategy itself is unclear. That said, we were able to develop a simplified version of Montgomery's proof that does generalize to hypergraphs when aided by the addition of a couple innovative ideas. In fact, our proof of Theorem~\ref{thm:Main} also holds for $r=2$ and so provides a conceptually streamlined proof of the weaker result that $\delta^*_{K_q} \le 1 - \frac{1}{q^{1+o(1)}}$ (the best-known bound previous to Montgomery's work was $1-O\left(\frac{1}{q^{3/2}}\right)$ by Barber, K\"uhn, Lo, Montgomery and Osthus~\cite{BKLMO2017fractional}). 

There are essentially three core parts of Montgomery's proof strategy: (1) show that one can convert an `almost' fractional decomposition into a fractional decomposition; (2) show that an appropriately sized complete graph less a matching (or the union of some matchings) admits a fractional decomposition; and (3) prove that a certain complicated random process yields an almost fractional decomposition into subgraphs that are cliques minus matchings. To finish, one concatenates the above steps in the appropriate order.

For (1), Montgomery proved that a $\frac{1}{2q+1}$-almost fractional $K_{2q+2}$-decomposition implies a fractional $K_q$-decomposition (where $\varepsilon$-almost means every edge receives weight in $[1-\varepsilon,1]$ rather than exactly $1$ --- see Definition~\ref{def:AlmostDecompo} for a formal definition). For (2), Montgomery proved that $K_{2q+2}-M$ has a fractional $K_q$-decomposition for any matching $M$; by induction this implied that $K_{32q+62}-\bigcup_{i\in [5]}M_i$ admits a fractional $K_q$-decomposition for any five matchings $M_1,\ldots, M_5$. It also implied (1) via a coloring/averaging trick which we will also utilize. We note that proving that $K_{2q+2}-M$ has a fractional $K_q$-decomposition is especially natural as it is essentially the first non-trivial special case of the folklore conjecture. (Indeed, the authors with Henderson originally found counterexamples to the folklore conjecture by considering the special case of deleting two matchings.) Step (3), which is by far the most complicated, then showed an almost fractional decomposition into such subgraphs. 

As to the difficulties in generalizing the above to hypergraphs, essentially no step immediately generalizes to hypergraphs. For the crucial step (2), Montgomery proved the existence of a $K_q$-decomposition by directly providing a set of weights for the various types of cliques so as to satisfy a certain linear system of equations. To invoke this approach for hypergraphs seems much too complicated; indeed, even the case of a perfect matching which is trivial for graphs by symmetry seems hard for hypergraphs using the weight-based approach. 

Another confounding issue is whether one should be trying to prove that a) deleting a hypergraph matching (a set of vertex-disjoint edges) admits a fractional $K_q^r$-decomposition, or b) deleting an $(n,r,r-1)$-Steiner system does. The latter is actually a special case of Conjecture~\ref{conj:GKO} (for $n=\Theta(q^{r-1})$ say) and so would seem more natural; that said, the former is the approach that works nicely with induction.

As for step (3), it is unclear how to generalize Montgomery's random process (let alone how to analyze it) since the process crucially uses selecting non-neighbors of vertices.

\subsection{On the Novelty of Our Proof}

\emph{How then did we overcome these difficulties?} The first key is to prove a version of step (1) \emph{before proving the deleting matching lemma of step (2)} so that we can invoke the resulting fixing lemma in the proof of step (2). To that end, we prove (Theorem~\ref{thm:MissingOneEdge}) that $K_{qr}^r-e$ (missing one edge) admits a fractional $K_q$-decomposition via a direct inclusion-exclusion calculation of clique weights. This allows us to prove a fixing lemma (Corollary~\ref{cor:ColorTrick}) using Montgomery's coloring trick and hence to convert a $\frac{1}{\binom{rq}{r}}$-almost fractional $K_{rq}^r$-decomposition into a fractional $K_q^r$-decomposition (Corollary~\ref{cor:AlmostToFull}). 

The second key is to prove that $K_{Cq}-M$ admits an almost fractional decomposition and then invoke Corollary~\ref{cor:AlmostToFull} to yield a fractional $K_q$-decomposition (see Theorem~\ref{thm:OneMatchingMissing}).  Crucially $M$ is a hypergraph matching here, both for this proof step but also for the purposes of induction to show a similar statement for the union of a constant number of matchings (Corollary~\ref{cor:MissingManyMatchings}). \emph{But how to prove $K_{Cq}-M$ admits an almost fractional $K_q$-decomposition?} If only one could sample the vertices with some constant probability $p$ but such that no edge of $M$ is chosen! While this cannot be done independently, there does exist a \emph{quasi-independent} probability distribution $\phi$ on the subsets of $e\in M$ with $\phi(e)=0$ but having the property that for any other subset $T\subsetneq e$, the probability its vertices are chosen is $p^{|T|}$ (provided the necessary condition that $p\le 1/2$ holds of course -- see Lemma~\ref{lem:ProbDist} for the construction). Then applying this distribution independently to each edge of $M$ (and selecting vertices not in $M$ independently with probability $p$) yields a distribution into cliques with each edge having the same probability of $p^r$. Finally, we have to throw out all samples with fewer than $q$ vertices but this happens with small probability as the expected number of vertices is much larger than $q$. 

For step (3), the key is to create a probability distribution over subgraphs $H$ of the form $K_{Cq}-M^*$ where $M^*$ is the union of at most $m$ matchings and where crucially every edge has approximately the same probability to be in $H$ (since such is equivalent to the desired almost decomposition) -- see Theorem~\ref{thm:AlmostBySampling}. To create this, we randomly sample $Cq$ vertices and keep the subgraphs which are missing at most $m$ matchings. While simple, the analysis to prove this happens with high probability requires some care. If one was missing $(n,r,r-1)$-Steiner systems, this analysis would be almost immediate but for hypergraph matchings this requires some clever accounting. In particular, we have to understand the structure of hypergraphs without isolated vertices -- namely that they admit a natural \emph{edge exploration ordering} (see Proposition~\ref{prop:Exploration}) that can be used to bound the probability of missing too many matchings.

\subsection{Organization of the Paper} 
In~\cref{sec:prelim}, we prove Theorem~\ref{thm:MissingOneEdge} and Corollaries~\ref{cor:ColorTrick} and~\ref{cor:AlmostToFull}. Before proving those results however, we prove a useful ``Concatenation Lemma'' (Lemma~\ref{prop:AlmostConcatenates}) for concatenating almost fractional decompositions. The ``missing matchings'' case is handled in~\cref{sec:MissingMatchings}, where we prove Theorems~\ref{thm:OneMatchingMissing} and Corollary~\ref{cor:MissingManyMatchings}. \cref{sec:sampling} is then dedicated to proving Theorem~\ref{thm:AlmostBySampling} and Proposition~\ref{prop:Exploration}. Finally, in~\cref{sec:MainProof} we concatenate the two decompositions from Theorem~\ref{thm:AlmostBySampling} and Corollary~\ref{cor:MissingManyMatchings} to prove our main result, Theorem~\ref{thm:Main}.

\section{Missing One Edge and a Fixing Lemma}\label{sec:prelim}

We denote by $[r-1]_0$ the set $\{0,1,\ldots,r-1\}$. For an $r$-graph $G$, that is, an $r$-uniform hypergraph $G$, a {\em fractional $F$-packing} of $G$ is an assignment of non-negative weights to each copy of $F$ in $G$. For a fractional $F$-packing $\phi$ of $G$, we let $\partial\phi(e)$ be the total weight of $\psi$ over the edge $e$ of $G$, that is $\partial\phi(e):= \sum_{F\in G: e\subseteq F} \psi(F)$. For an $r$-graph $G$, we let $\mc{K}_q^r(G)$ denote the subgraphs of $G$ isomorphic to $K_q^r$. As explained in the proof outline, an almost decomposition is an assignment of weights such that every edge receives a total weight of roughly $1$, as follows.

\begin{definition}\label{def:AlmostDecompo}
Let $F$ be a hypergraph and let $\eta\in [0,1]$. An \emph{$\eta$-almost fractional $F$-decomposition} of a hypergraph $G$ is a fractional $F$-packing $\phi$ of $G$ such that for every edge $e\in E(G)$, $\partial\phi(e)$ is in $[1-\eta,1]$. 
\end{definition}

\subsection{Concatenation Lemma}

Before diving into our main decomposition results, we prove the following {\em concatenation lemma}. It extends, to more general decompositions, the natural observation that if a graph $G$ admits an $H$-decomposition, and $H$ admits an $H'$-decomposition, then $G$ admits an $H'$-decomposition. Specifically, we need such a lemma to hold for $\eta$-almost fractional decompositions, and for assignments of weights to families $\mc{H}$ of subgraphs of $G$. To that end, we define the following concept of $\mc{H}$-decompositions.

\begin{definition}\label{def:FamilyDecompo}
Let $G$ be a hypergraph and let $\mc{H}$ be a family of subgraphs of $G$. A \emph{fractional $\mc{H}$-packing} of $G$ is an assignment $\phi$ of non-negative weights to elements of $\mc{H}$ such that for each $e\in G$, we have $\partial \phi(e) := \sum_{H\in \mc{H}: e\in H} \phi(H) \in [0,1]$. We say the fractional $\mc{H}$-packing is a \emph{fractional $\mc{H}$-decomposition} if $\partial \phi(e) = 1$ for all $e\in G$. For a real $\eta\in [0,1]$, we say the fractional $\mc{H}$-packing is an \emph{$\eta$-almost fractional $\mc{H}$-decomposition} if $\partial \phi(e) \in [1-\eta, 1]$ for all $e\in G$. 
\end{definition}

A fractional $K_q^r$-decomposition of an $r$-graph $G$ is then equivalent to a fractional $\mc{K}_q^r(G)$-decomposition of $G$; similarly for $\eta\in [0,1]$, an $\eta$-almost fractional $K_q^r$-decomposition of $G$ is equivalent to an $\eta$-almost fractional $\mc{K}_q^r(G)$-decomposition of $G$. 

\begin{lem}[Concatenation]\label{prop:AlmostConcatenates}
Let $G$ be an $r$-graph and let $\mc{H}$ and $\mc{H}'$ be families of subgraphs of $G$. For each $H\in \mc{H}$, let $\mc{H}'(H) := \{H' \in \mc{H}': H' \subseteq H\}$. If there exists $\alpha,\beta\in [0,1]$ such that $G$ has an $\alpha$-almost fractional $\mc{H}$-decomposition and each $H\in \mc{H}$ has a $\beta$-almost fractional $\mc{H}'(H)$-decomposition, then $G$ has an $(\alpha+\beta)$-almost fractional $\mc{H}'$-decomposition.  
\end{lem}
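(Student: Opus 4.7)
The plan is to construct the desired $(\alpha+\beta)$-almost fractional $\mc{H}'$-decomposition of $G$ by the natural ``weighted concatenation'' of the two given decompositions, and then verify the bounds on $\partial\psi^\ast(e)$ for every $e \in E(G)$.

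Let $\phi$ be the given $\alpha$-almost fractional $\mc{H}$-decomposition of $G$, and for each $H \in \mc{H}$, let $\psi_H$ be the given $\beta$-almost fractional $\mc{H}'(H)$-decomposition of $H$. For each $H' \in \mc{H}'$, I would define
$$\psi^\ast(H') := \sum_{H \in \mc{H} \,:\, H' \in \mc{H}'(H)} \phi(H)\,\psi_H(H').$$
Since all summands are non-negative, $\psi^\ast$ is a fractional $\mc{H}'$-packing of $G$.

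Next, I would compute $\partial\psi^\ast(e)$ for a fixed $e \in E(G)$ by swapping the order of summation. Observe that if $H' \in \mc{H}'$ contains $e$ and $H' \subseteq H$, then $e \in H$, so only $H \in \mc{H}$ with $e \in H$ can contribute. This gives
$$\partial\psi^\ast(e) \;=\; \sum_{H' \in \mc{H}' \,:\, e \in H'} \sum_{H \in \mc{H} \,:\, H' \subseteq H} \phi(H)\,\psi_H(H') \;=\; \sum_{H \in \mc{H} \,:\, e \in H} \phi(H) \sum_{H' \in \mc{H}'(H) \,:\, e \in H'} \psi_H(H') \;=\; \sum_{H \in \mc{H} \,:\, e \in H} \phi(H)\,\partial\psi_H(e).$$

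Finally, I would use the two hypothesized bounds. Since $\partial\psi_H(e) \le 1$ for every relevant $H$,
$$\partial\psi^\ast(e) \;\le\; \sum_{H \in \mc{H} \,:\, e \in H} \phi(H) \;=\; \partial\phi(e) \;\le\; 1.$$
On the other hand, since $\partial\psi_H(e) \ge 1-\beta$ and $\partial\phi(e) \ge 1-\alpha$,
$$\partial\psi^\ast(e) \;\ge\; (1-\beta)\,\partial\phi(e) \;\ge\; (1-\alpha)(1-\beta) \;\ge\; 1 - \alpha - \beta.$$
Therefore $\partial\psi^\ast(e) \in [1-\alpha-\beta,\,1]$ for every $e \in E(G)$, so $\psi^\ast$ is the desired $(\alpha+\beta)$-almost fractional $\mc{H}'$-decomposition.

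Honestly, there is no real obstacle here: the main content is choosing the right formula for $\psi^\ast$ and being careful with the order of summation. The calculation is essentially bookkeeping, with the slight subtlety that one must note $e \in H' \subseteq H$ forces $e \in H$ in order to apply the hypothesis on $\phi$ at the end.
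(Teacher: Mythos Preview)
Your proof is correct and follows essentially the same approach as the paper: define the concatenated packing by $\psi^\ast(H')=\sum_{H\supseteq H'}\phi(H)\psi_H(H')$, swap the order of summation to obtain $\partial\psi^\ast(e)=\sum_{H\ni e}\phi(H)\,\partial\psi_H(e)$, and then bound using $(1-\alpha)(1-\beta)\ge 1-\alpha-\beta$. The only cosmetic difference is notation.
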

\begin{proof}
    Let $\phi_0$ be the $\alpha$-almost fractional $\mc{H}$-decomposition of $G$, and for every $H\in\mc{H}$, let $\phi_H$ be the $\beta$-almost fractional $\mc{H}'(H)$-decomposition of $H$. We assign the following non-negative weight to every $H'\in\mc{H}'$,
    \[\psi(H'):=\sum_{H\in\mc{H}:H'\subseteq H}\phi_H(H')\cdot\phi_0(H).\]
    For every edge $e\in G$, we have
    \begin{align*}
        \partial\psi(e) 
        = \sum_{H'\in \mc{H}': e\in H'} \psi(H')
        &= \sum_{H'\in \mc{H}': e\in H'} \sum_{H\in\mc{H}:H'\subseteq H}\phi_H(H')\cdot\phi_0(H) \\
        &= \sum_{H\in\mc{H}: e\in H} \left[\phi_0(H)\cdot\sum_{H'\in \mc{H}'(H): e\in H'} \phi_H(H')\right] \\
        &= \sum_{H\in\mc{H}: e\in H} \phi_0(H)\cdot\partial\phi_H(e).
    \end{align*}
    Therefore, for every edge $e\in G$, we have
    \[(1-\beta)\cdot\sum_{H\in\mc{H}: e\in H} \phi_0(H) 
    \leq \partial\psi(e) 
    \leq \sum_{H\in\mc{H}: e\in H} \phi_0(H).\]
    hence,
    \((1-\beta)(1-\alpha) \leq \partial\psi(e) \leq 1,\) and $ \partial\psi(e)\in[1-(\alpha+\beta),1]$, as desired.
\end{proof}

\subsection{Missing One Edge and Fixing}

Our second preliminary result is a ``fixing'' statement and its main corollary: we show that an $\eta$-almost fractional $K_{rq}^r$-decomposition of a hypergraph $G$, for small enough $\eta>0$, implies the existence of a fractional $K_{q}^r$-decomposition of $G$. To do so, we start by showing that $K_{rq}^r-e$ admits a fractional $K_q$-decomposition for every edge $e\in K_{rq}^r$. By taking such a decomposition of $K_{rq}^r-e$ for each edge $e\in K_{rq}^r$, and averaging with some well-chosen weights, we prove a {\em fixing} result; for this, we were inspired by a clever idea of Montgomery from his work on fractional decompositions of dense graphs~\cite{montgomery2019fractional}. Namely, we prove that for any choice of mapping $\phi:E(K_n^r)\to[1-\eta,1]$, there exists a fractional $K_q^r$-packing of $K_{rq}^r$ such that the sum of the weights along each edge is exactly $\phi(e)$. We can then take an $\eta$-almost fractional $K_{rq}^r$-decomposition of a hypergraph $G$, and use our fixing result to find a $K_q^r$-packing of each copy of $K_{rq}^r$ in $G$, whose weights are carefully defined to compensate the missing weights on each edges, hence yielding a fractional $K_{q}^r$-decomposition of $G$.

\begin{thm}\label{thm:MissingOneEdge}
Let $q>r\ge2$ be integers and let $n:=rq$.  If $e\in K_n^r$, then $K_n^r-e$ has a fractional $K_q^r$‑decomposition.
\end{thm}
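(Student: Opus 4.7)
The plan is to exploit the symmetry of $K_n^r - e$ to reduce the problem to a small triangular linear system. Write $V(e) = \{v_1, \dots, v_r\}$ and consider the action of $S_{V \setminus V(e)} \times S_{V(e)}$ on $K_n^r - e$. Averaging any fractional $K_q^r$-decomposition over this group preserves its validity, so I may assume $\phi(K)$ depends only on $k := \abs{V(K) \cap V(e)}$. Since cliques with $k = r$ contain $e$ and so are not in $K_n^r - e$, the index $k$ ranges over $\{0, 1, \dots, r-1\}$, and I write $w_k$ for the common weight on the $k$-th orbit. Similarly, non-$e$ edges fall into $r$ orbits indexed by $j := \abs{V(f) \cap V(e)} \in \{0, \dots, r-1\}$.

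A direct count shows that the number of $K_q^r$ subgraphs with $\abs{V(K) \cap V(e)} = k$ containing a fixed edge $f$ in orbit $j$ equals $\binom{r - j}{k - j}\binom{rq - 2r + j}{q - r - (k - j)}$: one chooses the $k - j$ extra vertices of $V(K) \cap V(e)$ from $V(e) \setminus V(f)$, then extends $V(f) \setminus V(e)$ to a $q$-set inside $V \setminus V(e)$ by picking $q - k - (r - j)$ further vertices. Hence the decomposition constraints $\partial \phi(f) = 1$ reduce to the $r \times r$ linear system
\[
\sum_{k = j}^{r-1} \binom{r - j}{k - j}\binom{rq - 2r + j}{q - r - (k - j)} \, w_k \;=\; 1 \qquad (j = 0, 1, \dots, r-1).
\]
Because the $j$-th equation involves only $w_j, w_{j+1}, \dots, w_{r-1}$, the system is upper-triangular, and back-substitution starting from $w_{r-1} = 1/\binom{rq - r - 1}{q - r}$ uniquely determines every $w_k$.

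The remaining and main obstacle is to verify that each $w_k$ produced by back-substitution is non-negative. My plan is to carry out this back-substitution as an explicit inclusion-exclusion calculation on clique counts and then simplify the resulting alternating sum via Chu--Vandermonde style binomial identities, aiming for a closed form that exhibits $w_k$ as a manifestly positive ratio of binomials. Should this direct simplification prove unwieldy, an alternative route is probabilistic: construct a distribution on $K_q^r \subseteq K_n^r - e$ by first sampling $I \subsetneq V(e)$ from a distribution $\pi_{\abs{I}}$, and then completing $I$ to a $q$-set uniformly at random from $V \setminus V(e)$. If $\pi$ is tuned so that every non-$e$ edge is covered with the same probability (an $r \times r$ linear condition with one degree of freedom corresponding to the overall scaling), rescaling yields a valid fractional $K_q^r$-decomposition whose weights $w_k$ are non-negative by construction.
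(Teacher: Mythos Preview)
Your setup matches the paper's exactly: the same symmetry reduction to weights $w_k$ indexed by $k=\lvert V(K)\cap V(e)\rvert$, the same upper-triangular $r\times r$ system, the same back-substitution starting at $w_{r-1}$. The genuine gap is the non-negativity of the $w_k$, which you defer to an unexecuted plan. Your primary route---explicit inclusion--exclusion followed by a Chu--Vandermonde simplification to a closed form---may be feasible but is not obviously so, and you have not carried it out. Your probabilistic alternative is circular: tuning the distribution $\pi$ so that all non-$e$ edges are covered with equal probability is precisely the same linear system rewritten in the variables $\pi_k=\binom{r}{k}\binom{n-r}{q-k}\,w_k$, and non-negativity of $\pi$ is equivalent to non-negativity of $w$, so nothing is gained ``by construction''.

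The paper sidesteps any closed form via a short monotonicity argument. Writing $a_{t,i}=\binom{r-t}{i-t}\binom{n-2r+t}{q-r-(i-t)}$ for the coefficient matrix, a one-line ratio computation gives $a_{t,i}\le a_{t+1,i}$ for every $i>t$; this is where $n=rq$ enters, via $r(q-r)\le rq-2r=n-2r$. Reverse induction on $t$ then yields
\[
a_{t,t}\,w_t \;=\; 1-\sum_{i>t}a_{t,i}\,w_i \;\ge\; 1-\sum_{i>t}a_{t+1,i}\,w_i \;=\; 1-1 \;=\; 0,
\]
so $w_t\ge 0$. This row-comparison trick is the missing idea: rather than invert the matrix, you only need that each row is entrywise dominated (off the diagonal) by the row below it.
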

\begin{proof}
For $i\in [r-1]_0$, let $\mc{Q}_i := \{ Q\in \mc{K}_q^r(K_n^r): |V(Q)\cap V(e)|=i\}$. We seek to define non-negative reals $(w_i: i\in [r-1]_0)$ such that defining $\phi(Q) := w_{|V(Q)\cap V(e)|}$ yields a fractional $K_q^r$-decomposition of $K_n^r-e$. (Note that if a fractional $K_q^r$-decomposition of $K_n^r-e$ exists, then an average over all permutations of $e$ and $[n]-e$ yields such a symmetrized decomposition). 

For $\phi$ to yield a fractional $K_q^r$-decomposition, we require that $\partial\phi(f)=\sum_{Q\supseteq f} \phi(Q) = 1$ for every edge $f\in K_n^r-e$. Now fix $f\in K_n^r-e$ with $|f\cap e|=t\in [r-1]_0$. then every clique $Q$ containing $f$ satisfies $|Q\cap e|=i$ for some $i\ge t$ and to form such a clique one chooses exactly $i-t$ additional vertices from $e\setminus f$ (which is of size $r-t$) and exactly $q-r-(i-t)$ vertices from $V(K_n^r)\setminus (e\cup f)$ (which is of size $n-2r+t$). Thus, the $w_i$ need to satisfy the following equation:
$$\sum_{i\ge t}^{r-1} \binom{r-t}{i-t}\binom{n-2r+t}{q-r-i+t} w_i = 1.$$
Hence we define for all $t,i\in [r-1]_0$ and  $a_{t,i} := \binom{r-t}{i-t}\binom{n-2r+t}{q-r-i+t}$ if $t\le i$ and $a_{t,i}:= 0$ otherwise. Thus, we seek the solution of the system of $r$ equations $Aw=\mathbf{1}$ where $A$ is the matrix with values $a_{t,i}$ and $w$ is the vector with values $w_i$ and $\mathbf{1}$ denotes the all ones vector. 

Note that as $n\ge rq \ge q+r$ (as $q> r\ge 2$), we have that $n-2r \ge q-r$ and so all of the $a_{t,i}$ are well-defined and $a_{t,i} \ge 1$ for all $t \le i$. Hence $A$ is a real upper-triangular matrix with positive diagonal entries and hence has a non-zero determinant and so is invertible. Thus $w=A^{-1}\cdot \mathbf{1}$. 

It remains to show that $w_t \ge 0$ for all $t\in [r-1]_0$. We prove this by reverse induction on $i$. Namely, $w_{r-1} = 1/a_{r-1,r-1} \ge 0$. So we assume by induction that $w_{i}\ge 0$ for all $i > t$ and we seek to prove $w_t \ge 0$. We note that by assumption $\sum_{i\ge t+1}^{r-1} a_{t+1,i}\cdot w_i = 1$ and $\sum_{i\ge t}^{r-1} a_{t,i}\cdot w_i = 1$. 

The key observation to note is that for all $i > t$, we have $a_{t,i} \le a_{t+1,i}$ since 
$$\frac{a_{t,i}}{a_{t+1,i}} = \frac{\binom{r-t}{i-t} \binom{n-2r+t}{q-r-i+t}}{\binom{r-t-1}{i-t-1} \binom{n-2r+t+1}{q-r-i+t+1}} = \frac{r-t}{i-t} \cdot \frac{q-r-i+t+1}{n-2r+t+1} \le r \cdot \frac{q-r}{n-2r} \le 1$$
where we used that $i-t\ge 1$ and that $r(q-r) \le rq-r^2\le rq-2r = n-2r$ since $r\ge 2$. But then we find that
$$a_{t,t} \cdot w_t = 1 - \sum_{i \ge t+1}^{r-1} a_{t,i}\cdot w_i \ge 1 - \sum_{i\ge t+1} a_{t+1,i}\cdot w_i = 1-1 = 0$$
Since $a_{t,t}\ge 1$, we then find that $w_t\ge 0$ as desired.
\end{proof}

By taking the decompositions of $K_n-e$ given by~\cref{thm:MissingOneEdge} for all edges $e\in K_n^r$, and averaging with some well-chosen weights, we prove the following {\em fixing} result, finding a fractional packing $\Phi$ with fixed and pre-determined values for $\partial\Phi$.

\begin{cor}\label{cor:ColorTrick}
Let $q > r \ge 2$ be integers and let $n:=rq$. If $\phi:E(K_n^r)\rightarrow \Big[1-\frac{1}{\binom{n}{r}},1\Big]$, then there exists a fractional $K_q^r$-packing $\Phi$ of $K_n^r$ such that $\partial \Phi(e) = \phi(e)$ for all $e\in K_n^r$. 
\end{cor}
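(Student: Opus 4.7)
The plan is to exploit the fractional decompositions of $K_n^r-e$ provided by \cref{thm:MissingOneEdge} via a clever averaging over the choice of ``missing'' edge, essentially the coloring/averaging trick mentioned in the introduction. For each edge $e\in K_n^r$, let $\psi_e$ be the fractional $K_q^r$-decomposition of $K_n^r-e$ guaranteed by \cref{thm:MissingOneEdge}, extended to all cliques of $K_n^r$ by assigning weight $0$ to any clique that contains $e$. Then $\partial\psi_e(f)=1$ if $f\neq e$ and $\partial\psi_e(e)=0$.

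I would then look for a packing of the form
\[
\Phi(Q) \;:=\; \sum_{e\in K_n^r} \lambda_e \,\psi_e(Q)
\]
for non-negative multipliers $\lambda_e$. For any edge $f$, this gives $\partial\Phi(f) = \bigl(\sum_{e} \lambda_e\bigr) - \lambda_f$. Setting $S:=\sum_e\lambda_e$, the requirement $\partial\Phi(f)=\phi(f)$ forces $\lambda_f = S-\phi(f)$; summing over $f$ then forces $S = \frac{\sum_g\phi(g)}{\binom{n}{r}-1}$, which in turn uniquely determines each $\lambda_f$. Since $\psi_e$ is a fractional $K_q^r$-packing, $\Phi$ is automatically a fractional $K_q^r$-packing provided $\lambda_f\ge 0$ for every $f$.

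The only real step is therefore to verify $\lambda_f\ge 0$, i.e.\ that $S\ge\phi(f)$, which amounts to checking $\sum_g\phi(g)\ge(\binom{n}{r}-1)\phi(f)$. Using $\phi(f)\le 1$ on the right and the lower bound $\phi(g)\ge 1-\tfrac{1}{\binom{n}{r}}$ on the left (so $\sum_g\phi(g)\ge\binom{n}{r}-1$), this is immediate. Thus the range of $\phi$ is exactly what makes the averaging argument go through, and the proof reduces to a short calculation once \cref{thm:MissingOneEdge} is in hand. There is no real obstacle; the tight bound $1-1/\binom{n}{r}$ in the hypothesis is dictated precisely by the inequality $S\ge \max_f\phi(f)$, so the statement is essentially the sharpest one that this averaging method can give.
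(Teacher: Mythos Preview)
Your argument is correct and follows essentially the same averaging idea as the paper: both take non-negative linear combinations of the edge-deleted decompositions $\psi_e$ from \cref{thm:MissingOneEdge} and solve the resulting linear system for the coefficients. The only difference is cosmetic: the paper also mixes in the symmetric full decomposition $\Phi_0$ of $K_n^r$ before averaging, whereas you dispense with $\Phi_0$ entirely and work directly with the $\psi_e$; your version is therefore marginally cleaner, but the two constructions are equivalent in spirit and yield the same packing up to the choice of basis for the affine space of solutions.
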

\begin{proof}
Let $\Phi_0$ denote a fractional $K_q^r$-decomposition of $K_n^r$ (such exists by symmetry); for each edge $e\in K_n^r$, let $\Phi_{e}$ denote a fractional $K_q^r$-decomposition of $K_n^r-e$; note that such a decomposition exists by Theorem~\ref{thm:MissingOneEdge}. 

Let $\varepsilon := \frac{1}{\binom{n}{r}}$. For each $e\in K_n^r$, let $\lambda_e := \frac{\phi(e) - (1-\varepsilon)}{\varepsilon}$. Note that since $\phi(e) \in [1-\varepsilon,1]$, we have that $\lambda_e\in [0,1]$. Now we define 
$$\Phi_e' := \lambda_e \cdot \Phi_0 + (1-\lambda_e) \cdot \Phi_e.$$ 
Note that $\Phi'_e$ is fractional $K_q^r$-packing of $K_n^r$ such that $\partial \Phi'_e(e) = \lambda_e$ and $\partial \Phi'_e(f) = 1$ for each $f\in K_n^r-e$. 
Finally we set
$$\Phi := \frac{1}{\binom{n}{r}} \cdot \sum_{e\in K_n^r} \Phi'_e.$$
Note that $\Phi$ is a fractional $K_q^r$-packing of $K_n^r$ (since it is the average of fractional $K_q^r$-packings of $K_n^r$). Furthermore for each $f\in K_n^r$, we have that
\begin{align*}
\partial \Phi(f) = \frac{1}{\binom{n}{r}} \cdot \sum_{e\in K_n^r} \partial \Phi'_e(f) = \frac{1}{\binom{n}{r}} \cdot \left( \binom{n}{r}-1 + \lambda_f\right) = 1 - \varepsilon + \varepsilon \cdot \lambda_f = \phi(f),
\end{align*}
as desired.
\end{proof}

We now prove that we can transform an almost fractional $K_{rq}^r$-decomposition into a fractional $K_q^r$-decomposition.

\begin{cor}\label{cor:AlmostToFull}
Let $q > r \ge 2$ be integers. If an $r$-graph $G$ has a $\frac{1}{\binom{rq}{r}}$-almost fractional $K_{rq}^r$-decomposition, then $G$ has a fractional $K_q^r$-decomposition.     
\end{cor}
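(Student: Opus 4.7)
\textbf{Proof plan for Corollary~\ref{cor:AlmostToFull}.} Let $\psi$ be the given $\varepsilon$-almost fractional $K_{rq}^r$-decomposition of $G$, where $\varepsilon := 1/\binom{rq}{r}$, so $\partial\psi(e)\in[1-\varepsilon,1]$ for every edge $e\in G$. Note that simply concatenating $\psi$ with an exact fractional $K_q^r$-decomposition of each $K_{rq}^r$ (obtainable by symmetry from \cref{thm:MissingOneEdge} applied to one edge, or more directly) would only yield an $\varepsilon$-almost decomposition, via \cref{prop:AlmostConcatenates}. To push the boundary values back to exactly $1$, I would instead scale $\psi$ up slightly and use the fixing \cref{cor:ColorTrick} on each copy of $K_{rq}^r$ to cancel the resulting surplus.

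Concretely, define $\psi'(H):=\psi(H)/(1-\varepsilon)$ for every $H\in \mc{K}_{rq}^r(G)$; then $\partial\psi'(e)\in[1,\,1/(1-\varepsilon)]$ for every $e\in G$. Set $\alpha(e):=1/\partial\psi'(e)$, which lies in $[1-\varepsilon,1]$ and is therefore a legal target for the fixing corollary. For each $H\in\mc{K}_{rq}^r(G)$, apply \cref{cor:ColorTrick} with the target function $\phi_H:E(H)\to[1-\varepsilon,1]$, $\phi_H(e):=\alpha(e)$, to obtain a fractional $K_q^r$-packing $\Phi_H$ of $H$ satisfying $\partial\Phi_H(e)=\alpha(e)$ for all $e\in H$.

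Finally, aggregate as in \cref{prop:AlmostConcatenates}: define $\Psi(Q):=\sum_{H\in\mc{K}_{rq}^r(G):\,Q\subseteq H}\psi'(H)\cdot\Phi_H(Q)$ for every $Q\in\mc{K}_q^r(G)$. Non-negativity is clear, and swapping the order of summation gives
\[
\partial\Psi(e)=\sum_{H\in\mc{K}_{rq}^r(G):\,e\in H}\psi'(H)\cdot\partial\Phi_H(e)=\alpha(e)\cdot\partial\psi'(e)=1
\]
for every $e\in G$, so $\Psi$ is a fractional $K_q^r$-decomposition of $G$, as desired.

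The only real point of care is choosing the scaling factor. \cref{cor:ColorTrick} can only \emph{lower} the partial sums, since its allowed targets are at most $1$; hence one must first inflate $\psi$ so that every $\partial\psi'(e)$ lies above $1$, but by a multiplicative factor small enough that $1/\partial\psi'(e)$ still lies in $[1-\varepsilon,1]$. The choice $c=1/(1-\varepsilon)$ is exactly tight for this, which is why the hypothesis $\eta\le 1/\binom{rq}{r}$ is sufficient.
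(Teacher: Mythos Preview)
Your proof is correct and is essentially identical to the paper's: both use the target function $\alpha(e)=(1-\varepsilon)/\partial\psi(e)$ in \cref{cor:ColorTrick} and then concatenate; the only cosmetic difference is that you scale $\psi$ by $1/(1-\varepsilon)$ \emph{before} concatenating to land directly on $1$, whereas the paper concatenates first (obtaining $\partial\Phi(e)=1-\varepsilon$ everywhere) and scales afterwards.
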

\begin{proof}
Let $\varphi$ be a $\frac{1}{\binom{rq}{r}}$-almost fractional $K_{rq}^r$-decomposition of $G$. For each $Q\in \mc{K}_{rq}^r(G)$, let $\phi_Q$ be defined as $\phi_Q(e) := \frac{1-\frac{1}{\binom{rq}{r}}}{\partial \varphi(e)} $ for each $e\in Q$. Note that $\phi_Q(e) \in \left[1-\frac{1}{\binom{rq}{r}},1\right]$ for all $e\in Q$ since $\varphi(e)$ is. By Corollary~\ref{cor:ColorTrick}, there exists a fractional $K_q^r$-packing $\Phi_Q$ of $Q$ such that $\partial \Phi_Q(e)=\phi_Q(e)$ for all $e\in Q$. 

Now for each $Q' \in \mc{K}_q^r(G)$, let 
$$\Phi(Q'):= \sum_{Q\in \mc{K}_{rq}^r(G): Q'\subseteq Q} \varphi(Q)\cdot \Phi_Q(Q').$$
Note that $\Phi(Q')\ge 0$ for all $Q'\in \mc{K}_q^r(G)$ and furthermore for each $e\in G$, we have that
\begin{align*}
\partial \Phi(e) &= \sum_{Q'\in \mc{K}_q^r(G): e\in Q'} \Phi(Q') = \sum_{Q'\in \mc{K}_q^r(G): e\in Q'}~~\sum_{Q\in \mc{K}_{rq}^r(G): Q'\subseteq Q} \varphi(Q)\cdot \Phi_Q(Q')\\
&= \sum_{Q\in \mc{K}_{rq}^r(G): e\in Q}~~\sum_{Q'\in \mc{K}_q^r(Q): e\in Q'} \varphi(Q)\cdot \Phi_Q(Q') = \sum_{Q\in \mc{K}_{rq}^r(G): e\in Q} \varphi(Q)\cdot \partial\Phi_Q(e) \\
&= \sum_{Q\in \mc{K}_{rq}^r(G): e\in Q} \varphi(Q)\cdot \phi_Q(e) = \partial \varphi(e) \cdot \frac{1-\frac{1}{\binom{rq}{r}}}{\partial \varphi(e)} = 1-\frac{1}{\binom{rq}{r}},
\end{align*}
and hence $\frac{1}{1-\frac{1}{\binom{rq}{r}}} \cdot \Phi$ is a fractional $K_q^r$-decomposition of $G$ as desired. 
\end{proof}

\section{Missing Matchings}\label{sec:MissingMatchings}

We now show that, if $M$ is the union of some matchings, at most some ${\rm polylog}(n)$ of them, then $K_n-M$ admits a fractional $K_q^r$-decomposition. We do this in three steps. We first prove that $K_n-M$ admits an almost fractional decomposition if $M$ is a matching. We then apply our fixing result,~\cref{cor:AlmostToFull}, to yield a fractional decomposition, and use induction to obtain the desired result. We first require the following lemma.

\begin{lem}\label{lem:ProbDist}
Let $r\ge 2$ be an integer and let $p\in (0,1/2]$. If $S$ is a set of size $r$, then there exists a probability distribution $\phi$ on the subsets of $S$ such that $\phi(S)=0$ and for each $T\subsetneq S$, we have $\sum_{T': T\subseteq T'\subseteq S} \phi(T') = p^{|T|}$.
\end{lem}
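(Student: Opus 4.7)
The plan is to determine $\phi$ uniquely from the prescribed upward-sum constraints by Möbius inversion on the Boolean lattice $2^S$, write down the resulting closed form, and then verify non-negativity (which is where $p \le 1/2$ is used) and total mass $1$. Setting $f(T) := \sum_{T' \supseteq T} \phi(T')$, the hypotheses read $f(T) = p^{|T|}$ for every $T \subsetneq S$ together with $f(S) = \phi(S) = 0$. Möbius inversion on the Boolean lattice then forces, for each $T \subsetneq S$,
$$
\phi(T) \;=\; \sum_{T' \supseteq T} (-1)^{|T'|-|T|} f(T') \;=\; \sum_{j=|T|}^{r-1} \binom{r-|T|}{j-|T|} (-1)^{j-|T|} p^{j},
$$
and I would take this as the definition of $\phi(T)$ for $T \subsetneq S$, together with $\phi(S) := 0$. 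Adding and subtracting the missing $j = r$ term in the binomial expansion yields the tidy closed form
$$
\phi(T) \;=\; p^{|T|}\!\left[(1-p)^{r-|T|} - (-p)^{r-|T|}\right].
$$

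The verification that this is a probability distribution then splits by parity of $r - |T|$. When $r-|T|$ is odd, $-(-p)^{r-|T|} = +p^{r-|T|}$, so both summands inside the bracket are non-negative and $\phi(T) \ge 0$ automatically. When $r-|T|$ is even (and positive, since $T \subsetneq S$), the bracket equals $(1-p)^{r-|T|} - p^{r-|T|}$, which is non-negative precisely when $1-p \ge p$; this is the only place the hypothesis $p \le 1/2$ enters the argument, and it is exactly what is needed. The total mass is automatic: $\sum_{T \subseteq S} \phi(T) = f(\emptyset) = p^0 = 1$. The upward-sum identity for $T \subsetneq S$ holds by construction (it is precisely how $\phi$ was defined via Möbius inversion), which can also be checked directly by reversing the inversion step.

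The calculation is essentially a one-line application of Möbius inversion, and the main (really only) obstacle is noticing that the constraint $p \le 1/2$ is needed and sufficient to kill the negative contribution in the even-codimension terms. A useful mental picture is that the i.i.d.\ Bernoulli$(p)$ distribution on $2^S$ already satisfies all the upward-sum conditions but assigns mass $p^r$ to $S$; the Möbius formula above is the unique correction that redistributes this excess mass to the strict subsets while preserving every marginal, and $p \le 1/2$ is exactly the threshold at which this redistribution remains non-negative.
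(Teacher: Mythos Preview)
Your proof is correct and follows essentially the same approach as the paper: define the target upward sums $f$, invert via M\"obius on the Boolean lattice to obtain the closed form $\phi(T)=p^{|T|}\big[(1-p)^{r-|T|}-(-p)^{r-|T|}\big]$, and verify non-negativity using $p\le 1/2$ together with total mass $f(\emptyset)=1$. The parity split and the Bernoulli-redistribution heuristic you add are nice touches but do not depart from the paper's argument.
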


\begin{proof}
For each $T\subsetneq S$, let $f(T)=p^{|T|}$; let $f(S) = 0$. By M\"obius inversion, see e.g.~\cite[Example 3.8.3]{stanley2011enumerative}, $\phi$ satisfies $\sum_{T': T\subseteq T'\subseteq S} \phi(T') = f(T)$ for all $T\subseteq S$ if and only if for all $T\subseteq S$, we set 
$$\phi(T) := \sum_{T':T\subseteq T'\subseteq S} (-1)^{|T'\setminus T|}\cdot f(T') .$$   
For every $T\subseteq S$, with $|T|=t$, we then have
\begin{align*}
\phi(T) 
= \Big[ \sum_{k=0}^{r-t} \binom{r-t}{k}(-1)^{k}\cdot p^{t+k}\Big] - (-1)^{r-t}\cdot p^{r}
= p^{t}\cdot \Big( (1-p)^{r-t} - (-1)^{r-t}p^{r-t}\Big).
\end{align*}

For $p\in (0,1/2]$, we have that $1-p\ge p$ and hence $\phi(T)$ is positive for all $T\subseteq S$ (and indeed also at most $1$). Finally, we note that $f(\emptyset)=1$ and yet $\sum_{T\subseteq S} \phi(T)=f(\emptyset)$ by definition. Combining, we find that $\phi$ is the desired probability distribution.
\end{proof}

We are now ready to prove that the existence of the probability distribution guaranteed by~\cref{lem:ProbDist}, implies our desired result when $M$ is one matching.

\begin{thm}\label{thm:OneMatchingMissing}
For each integer $r\ge 2$, there exists an integer $C\ge 1$ such that the following holds: Let $q$ and $n$ be integers such that $q > r$ and $n\ge Cq$. If $M$ is a matching of $K_n^r$, then $K_n^r-M$ has a fractional $K_q^r$-decomposition.
\end{thm}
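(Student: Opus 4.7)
The plan is to use \cref{lem:ProbDist} to build, via random sampling, a fractional $K_{rq}^r$-packing of $K_n^r - M$ that is a $\tfrac{1}{\binom{rq}{r}}$-almost fractional $K_{rq}^r$-decomposition, and then invoke \cref{cor:AlmostToFull} to promote it to a fractional $K_q^r$-decomposition. Concretely, fix $p = 1/2$, and for each $f \in M$ let $\phi_f$ be the distribution on subsets of $f$ given by \cref{lem:ProbDist}. Sample $V \subseteq V(K_n^r)$ by taking, independently, $V_f \sim \phi_f$ for each $f \in M$ and including each $v \notin V(M)$ with probability $p$; set $V$ to be the union. Since $\phi_f(f)=0$ for every $f \in M$, no matching edge is contained in $V$, so every $K_{rq}^r$-subgraph of $K_V^r$ avoids $M$. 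Moreover, the marginal property of \cref{lem:ProbDist} factors across the independent components to give $\Pr[T \subseteq V] = p^{|T|}$ for every $T \subseteq V(K_n^r)$ not containing an $M$-edge; in particular $\Pr[e \subseteq V] = p^r$ for every edge $e \notin M$.

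For each copy $Q$ of $K_{rq}^r$ in $K_n^r - M$, set
\[
\phi(Q) \;:=\; \frac{1}{p^r}\cdot\mathbb{E}\!\left[\frac{\mathbf{1}[Q \subseteq V,\;|V|\ge rq]}{\binom{|V|-r}{rq-r}}\right],
\]
which amounts to averaging the uniform fractional $K_{rq}^r$-decomposition of $K_V^r$ over outcomes with $|V|\ge rq$ and normalizing by $p^r$. A direct switch of summations, using that the number of $K_{rq}^r$-subgraphs of $K_V^r$ through a fixed edge $e\subseteq V$ equals $\binom{|V|-r}{rq-r}$, yields for each $e\in K_n^r-M$
\[
\partial\phi(e)\;=\;\frac{1}{p^r}\cdot\Pr[e\subseteq V,\,|V|\ge rq]\;=\;\Pr[|V|\ge rq\mid e\subseteq V].
\]
In particular $\partial\phi(e)\le 1$, so $\phi$ is a fractional $K_{rq}^r$-packing of $K_n^r-M$; the remaining task is to show $\partial\phi(e)\ge 1-\tfrac{1}{\binom{rq}{r}}$ for every $e\notin M$.

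This concentration step is the main technical hurdle. Note that $|V| = \sum_{f\in M}|V_f| + \sum_{v\notin V(M)} X_v$ is a sum of independent random variables, each of range at most $r$, with $\mathbb{E}[|V|] = pn = n/2$. Conditioning on $e\subseteq V$ fixes at most $r$ of these independent components (the $V_f$ for the at most $r$ matching edges meeting $e$, together with the $X_v$ for $v\in e\setminus V(M)$), leaves the remaining components genuinely independent, and perturbs the conditional mean by at most $O(r)$; the explicit formula from the proof of \cref{lem:ProbDist} further shows that each conditioned $|V_f|$ remains a bounded random variable of range at most $r$. Hoeffding's inequality therefore gives
\[
\Pr\bigl[|V|<rq\,\big|\,e\subseteq V\bigr]\;\le\;\exp\!\bigl(-\Omega\bigl((pn-rq)^2/(nr)\bigr)\bigr).
\]
Choosing $C=\Theta(r\log r)$ large enough that $n\ge Cq$ gives $pn-rq = \Omega(rq\log r)$, so the exponent is $\Omega(q\log r)$, which dominates $\log\binom{rq}{r} = O(r\log q)$ uniformly in $q>r$. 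Hence $\partial\phi(e)\ge 1-\tfrac{1}{\binom{rq}{r}}$ for all $e\in K_n^r-M$, and applying \cref{cor:AlmostToFull} converts $\phi$ into the desired fractional $K_q^r$-decomposition of $K_n^r-M$.
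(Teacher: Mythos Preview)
Your proposal is correct and follows essentially the same approach as the paper: sample a random vertex set $V$ using the distribution of \cref{lem:ProbDist} on each matching edge, show that $\Pr[|V|\ge rq\mid e\subseteq V]\ge 1-\tfrac{1}{\binom{rq}{r}}$ for every $e\notin M$, deduce a $\tfrac{1}{\binom{rq}{r}}$-almost fractional $K_{rq}^r$-decomposition, and finish with \cref{cor:AlmostToFull}. The only cosmetic differences are that the paper bounds $|V|$ from below by an auxiliary sum $Y$ of genuine Bernoulli variables and applies Chernoff, whereas you apply Hoeffding directly to $|V|$; also, conditioning on $e\subseteq V$ can shift the mean by $O(r^2)$ rather than $O(r)$ (indeed it only increases the mean), but this is harmless.
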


\begin{nonlateproof}{thm:OneMatchingMissing}
Let $C\geq 32r^3$. Suppose that $M = \{e_1,\ldots, e_m\}$ and let $p:= \frac{1}{2}$. For each $i\in [m]$, let $\phi_i$ be the probability distribution as guaranteed by~\cref{lem:ProbDist} on the set $S_i:=V(e_i)$ for probability $p$. Select a random set of vertices $X\subseteq V(K_n^r)$ by independently choosing, for each $i\in [m]$, a subset $X_i$ of $S_i$ according to $\phi_i$, and by choosing each vertex of $V(K_n^r)\setminus V(M)$ independently at random with probability $p$. Thus, for each $T\subseteq V(K_n^r)$, letting $T_i := T \cap S_i$ for each $i\in [m]$, 
$$\Prob{T=X} = p^{|T\setminus V(M)|} \cdot \prod_{i\in [m]} \Prob{T_i= X_i}.$$

If there exists an edge $e_i\in M$ such that $V(e_i)$ is contained in $T$, that is,
if $S_i\subseteq T$, then we obtain that $\Prob{X=T} = 0$, since $\phi_i(S_i)=0$ by~\cref{lem:ProbDist}, thus the following claim holds.
\begin{claim}\label{cl:Cliques}
    If $\Prob{X=T} > 0$, then $T$ induces a clique in $K_n^r-M$.
\end{claim}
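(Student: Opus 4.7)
The plan is to prove the contrapositive: if $T$ does not induce a clique in $K_n^r - M$, then $\Prob{X = T} = 0$. First I would observe that, since the edge set of $K_n^r - M$ consists of exactly those $r$-subsets of $V(K_n^r)$ that do not lie in $M$, a set $T \subseteq V(K_n^r)$ fails to induce a clique in $K_n^r - M$ if and only if some edge $e_i \in M$ is entirely contained in $T$; equivalently, $S_i \subseteq T$ for some $i \in [m]$.

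Fix such an index $i$. Then $T_i := T \cap S_i = S_i$. Plugging into the product formula displayed just before the claim,
$$\Prob{X = T} = p^{|T \setminus V(M)|} \cdot \prod_{j \in [m]} \Prob{X_j = T_j},$$
the $i$-th factor equals $\Prob{X_i = S_i} = \phi_i(S_i)$, which is $0$ by the defining property of $\phi_i$ guaranteed by~\cref{lem:ProbDist}. Hence the whole product vanishes, giving $\Prob{X = T} = 0$, as required.

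There is no real obstacle here: the argument is essentially a direct unwinding of the paragraph immediately preceding the claim. The only property of $\phi_i$ needed is that $\phi_i(S_i) = 0$, which was baked into its construction in~\cref{lem:ProbDist}, and the factorization of $\Prob{X = T}$ into independent contributions from the matching edges and from $V(K_n^r)\setminus V(M)$ is immediate from the sampling procedure.
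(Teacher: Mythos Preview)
Your argument is correct and is essentially identical to the paper's own justification, which appears in the sentence immediately preceding the claim: both argue the contrapositive, noting that if some $S_i\subseteq T$ then the factor $\Prob{X_i=T_i}=\phi_i(S_i)=0$ kills the product. There is nothing to add.
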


Assume now that there does not exist such an edge $e_i$. Then, we find that
\[\Prob{T\subseteq X} 
= p^{|T\setminus V(M)|} \cdot  \prod_{i\in [m]} \Prob{T_i\subseteq X_i} 
= p^{|T\setminus V(M)|} \cdot  \prod_{i\in [m]} p^{|T_i|} = p^{|T|},\]
where we used $\Prob{T_i\subseteq X_i} = p^{|T_i|}$ as guaranteed by~\cref{lem:ProbDist}. In particular, for each $e\in K_n^r-M$, we find that $\Prob{V(e)\subseteq X} = p^r$. We now prove that, conditioning on that event, the random set $X$ contains at least $rq$ vertices with sufficiently large probability. 

\begin{claim}\label{cl:Prob}
For each $e\in K_n^r-M$, 
\[\Prob{|X|< rq~|~V(e)\subseteq X} \le  \frac{1}{\binom{rq}{r}}.\]    
\end{claim}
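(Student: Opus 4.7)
The plan is to establish concentration of $|X|$ about its conditional mean via Hoeffding's inequality. The key structural point is that $|X|$ decomposes as a sum of independent bounded random variables: for each $i\in[m]$, the set $X_i$ contributes $|X_i|\in\{0,\ldots,r-1\}$ (note $\phi_i(S_i)=0$), and for each $v\in V(K_n^r)\setminus V(M)$, the indicator $\mathbb{1}[v\in X]$ is an independent Bernoulli$(p)$. The event $V(e)\subseteq X$ splits as a conjunction of conditions on disjoint groups of these variables (namely $X_i\supseteq T_i$ for each $i$ with $T_i:=V(e_i)\cap V(e)$, together with $\mathbb{1}[v\in X]=1$ for $v\in V(e)\setminus V(M)$), so conditioning preserves the mutual independence required by Hoeffding.

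First I would lower-bound $\mathbb{E}[|X|\mid V(e)\subseteq X]$. Clearly $\Prob{v\in X\mid V(e)\subseteq X}=p$ for every $v\notin V(M)\cup V(e)$. For $v\in V(M)\setminus V(e)$ lying in the unique matching edge $e_{i(v)}$, the marginal identity from \cref{lem:ProbDist} gives
\[\Prob{v\in X_{i(v)}\mid X_{i(v)}\supseteq T_{i(v)}}=\frac{\Prob{X_{i(v)}\supseteq T_{i(v)}\cup\{v\}}}{\Prob{X_{i(v)}\supseteq T_{i(v)}}}=\frac{p^{|T_{i(v)}|+1}}{p^{|T_{i(v)}|}}=p,\]
provided $T_{i(v)}\cup\{v\}\subsetneq S_{i(v)}$; otherwise $|V(e_{i(v)})\cap V(e)|=r-1$ and this conditional probability is $0$ since $\phi_{i(v)}(S_{i(v)})=0$. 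Because $M$ is a matching and $|V(e)|=r$, the pairwise vertex-disjointness of edges of $M$ forces at most $r$ such exceptional vertices overall (indeed only one when $r\ge 3$). Hence at least $n-2r$ vertices outside $V(e)$ carry conditional probability $p$, and $\mathbb{E}[|X|\mid V(e)\subseteq X]\ge r+p(n-2r)=n/2$.

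With this I would apply Hoeffding's inequality to $|X|=r+\sum_i Y_i+\sum_{v\notin V(M)\cup V(e)} W_v$, where $Y_i=|X_i\setminus V(e)|\in[0,r]$ and $W_v=\mathbb{1}[v\in X]\in\{0,1\}$. The sum of squared ranges satisfies $mr^2+(n-rm)\le n(r+1)$ using $mr\le n$. Setting $t:=\mathbb{E}[|X|]-rq\ge n/2-rq\ge n/4$ (valid since $C\ge 4r$), Hoeffding yields
\[\Prob{|X|<rq\mid V(e)\subseteq X}\le\exp\!\left(-\frac{2t^2}{n(r+1)}\right)\le\exp\!\left(-\frac{n}{8(r+1)}\right).\]
Since $\ln\binom{rq}{r}\le r\ln(rq)$ and the hypothesis $n\ge Cq$ with $C\ge 32r^3$ gives $n\ge 32r^3 q$, this upper bound is easily at most $1/\binom{rq}{r}$ for every $q>r$, as required.

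The main technical obstacle is the bookkeeping in the expectation step, namely proving that the ``exceptional'' set of vertices (whose conditional inclusion probability drops from $p$ to $0$) has size $O(r)$ regardless of how large $M$ is. This is exactly where the matching hypothesis on $M$ is indispensable: pairwise vertex-disjointness of edges of $M$ forbids more than one edge from simultaneously sharing $r-1$ vertices with $V(e)$ when $r\ge 3$, which is what keeps the conditional expectation of $|X|$ linear in $n$ and hence comfortably above $rq$.
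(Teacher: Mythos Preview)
Your argument is correct, and the overall strategy---express $|X|$ (conditionally) as a sum of independent bounded pieces and apply a concentration inequality---matches the paper's. The technical route, however, differs in an instructive way.

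The paper sidesteps the conditional-marginal computation entirely. It sets $I:=\{i\in[m]:e\cap e_i\neq\emptyset\}$ and works with the auxiliary count
\[
Y:=\bigl|\{i\in[m]\setminus I:\ |X_i|\ge1\}\bigr|+\bigl|X\setminus(V(M)\cup V(e))\bigr|,
\]
which is a sum of genuine Bernoulli variables that are \emph{independent of the event} $\{V(e)\subseteq X\}$; thus no conditioning analysis is needed, and Chernoff applies directly. One then finishes via the trivial domination $|X|\ge Y$. In contrast, you keep all summands, compute the conditional marginals $\Prob{v\in X\mid V(e)\subseteq X}$ from \cref{lem:ProbDist}, isolate the at-most-$r$ ``exceptional'' vertices where this drops to $0$, and invoke Hoeffding to accommodate the non-Bernoulli blocks $|X_i\setminus V(e)|\in[0,r]$. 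Your approach buys a sharper conditional mean ($n/2$ versus roughly $n/(2r)$) at the cost of the exceptional-vertex bookkeeping and a slightly heavier inequality; the paper's approach is shorter and more robust because it never touches the conditioned blocks at all. Either way the final tail bound is comfortably below $1/\binom{rq}{r}$ under $C\ge 32r^3$.
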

\begin{proofclaim}
Let $I:= \{i\in [m]: e\cap e_i\ne \emptyset\}$, and $$Y:= |\{i\in [m]\setminus I: |X_i|\ge 1\}| + |X\setminus (V(M)\cup V(e))|.$$ 
Thus $Y$ is the sum of independent Bernoulli random variables, and is independent from $\{V(e)\subseteq X\}$. For each $i\in [m]$, let $v_i$ be a vertex in $V(e_i)$, and observe that, by~\cref{lem:ProbDist} we have
\(\Prob{|X_i|\ge 1} \geq \Prob{v_i\subseteq X_i}= p,\)
while for each $v\in V(K_n^r)\setminus V(M)$, we have $\Prob{v\in X} = p$. Therefore, with $m\leq \frac{n}{r}$,
\[\mathbb{E}{Y}\geq p\cdot m + p\cdot (n-mr-r)
\geq p\left(\frac{n}{r}-r\right) \ge \frac{Cq}{2r}-\frac{r}{2} \geq  \frac{Cq}{4r} \geq 2rq\]
where we used $C\ge 8r^2$ and $q\ge r \ge 2$. By the Chernoff bound, we find that
\[\Prob{~Y < rq~} 
\leq \mathbb{P}\Big[~Y < \frac12\cdot~ \frac{Cq}{4r}~\Big]
\leq e^{-(Cq/4r)/8} \le e^{-r^2q} \le \frac{1}{(rq)^r} \le \frac{1}{\binom{rq}{r}},\]
where for the third inequality we used that $C\ge 32r^3$ and for the second to last inequality we used that $rq \ge \ln (rq)$. With $|X|\ge Y$, we conclude that
$$\Prob{~|X| < rq~|~e\subseteq X} \le \Prob{~Y<rq~|~e\subseteq X} = \Prob{~Y<rq~} \le \frac{1}{\binom{rq}{r}},$$
as desired.
\end{proofclaim}
Let $\mc{H}$ be the family of cliques $T\subseteq K_n^r-M$ with $|T|\ge rq$. For every $T\in\mc{H}$, let $\Phi(T) := \frac{\Prob{X=T}}{p^r}$. For every edge $e\in K_n^r-M$, we have
\[\partial\Phi(e)=\sum_{T\in\mc{H}:e\subseteq T}\frac{\Prob{X=T}}{p^r} 
= \frac{\sum_{T\in\mc{H}:e\subseteq T}\Prob{X=T}}{\Prob{V(e)\subseteq X}}  = \Prob{|X|\geq rq~|~V(e)\subseteq X},\]
where the last equality follows from~\cref{cl:Cliques}. By Claim~\ref{cl:Prob}, we obtain that $\Phi$ is a $\frac{1}{\binom{rq}{r}}$-almost fractional $\mc{H}$-decomposition of $K_n^r-M$. By symmetry, every $T\in \mc{H}$ admits a fractional $K_{rq}^r$-decomposition. By Proposition~\ref{prop:AlmostConcatenates}, we can concatenate these decompositions, and we obtain a $\frac{1}{\binom{rq}{r}}$-almost fractional $K_{rq}^r$-decomposition of $K_n^r-M$. By~\cref{cor:AlmostToFull}, there exists a fractional $K_q^r$-decomposition of $K_n^r-M$, as desired.
\end{nonlateproof}

We can finally use an induction step to prove our desired result.

\begin{cor}\label{cor:MissingManyMatchings}
For each integer $r\ge 2$, there exists an integer $C\ge 1$ such that the following holds: Let $q,n$ and $m$ be positive integers such that $q > r$ and $n\ge C^m \cdot q$. If $M$ is the union of $m$ matchings of $K_n^r$, then $K_n^r-M$ has a fractional $K_q^r$-decomposition.
\end{cor}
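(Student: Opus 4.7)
The plan is a straightforward induction on $m$, taking $C$ to be the constant produced by Theorem~\ref{thm:OneMatchingMissing}. The base case $m = 1$ is then immediate from that theorem. For the inductive step, assume $m \ge 2$ and write $M = M_1 \cup M'$ with $M' := M_2 \cup \cdots \cup M_m$ a union of $m-1$ matchings. Set $Q := C^{m-1} q$. Since $n \ge C^m q = C \cdot Q$ and $Q \ge q > r$, Theorem~\ref{thm:OneMatchingMissing} applied with target clique size $Q$ yields a fractional $K_Q^r$-decomposition $\phi$ of $K_n^r - M_1$, i.e., nonnegative weights $\phi(S)$ on the $Q$-subsets $S \subseteq V(K_n^r)$ avoiding $M_1$ such that $\sum_{S \supseteq e} \phi(S) = 1$ for every $e \in E(K_n^r - M_1)$.

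Next I would reinterpret $\phi$ as a decomposition of $G := K_n^r - M$ into larger ``building blocks.'' Let $\mc{H}$ consist of the induced sub-hypergraphs $H_S := G[S]$, where $S$ ranges over the $Q$-subsets of $V(K_n^r)$ avoiding $M_1$; each such $H_S$ is isomorphic to $K_Q^r$ minus $M' \cap S$, which is a union of at most $m-1$ matchings. Defining $\psi(H_S) := \phi(S)$ gives a fractional $\mc{H}$-decomposition of $G$: for any $e \in E(G)$ one has $e \notin M$, so $e \in E(H_S)$ iff $e \subseteq S$; moreover, since $e \notin M_1$, the $Q$-subsets $S \supseteq e$ avoiding $M_1$ are exactly those whose block $H_S$ contains $e$, yielding $\partial \psi(e) = \sum_{S \supseteq e,\, S \text{ avoids } M_1} \phi(S) = 1$.

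The third step is to invoke the inductive hypothesis with parameter $m-1$ on each $H_S$: since $|V(H_S)| = Q = C^{m-1} q$ and $M' \cap S$ is a union of at most $m-1$ matchings, every $H_S$ admits a fractional $K_q^r$-decomposition. Setting $\mc{H}' := \mc{K}_q^r(G)$, one verifies that $\mc{H}'(H_S) = \mc{K}_q^r(H_S)$ (a $q$-subset of $S$ avoiding $M' \cap S$ also avoids $M_1$ since $S$ does, hence it avoids all of $M$), so these inner decompositions are fractional $\mc{H}'(H_S)$-decompositions. Concatenating via Lemma~\ref{prop:AlmostConcatenates} with $\alpha = \beta = 0$ then produces the desired fractional $K_q^r$-decomposition of $G$. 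No step poses a significant obstacle; the only delicate point is the reinterpretation above---that a fractional $K_Q^r$-decomposition of $K_n^r - M_1$ gives a fractional $\mc{H}$-decomposition of $K_n^r - M$---which reduces to the elementary observation that $E(G) \subseteq E(K_n^r - M_1)$.
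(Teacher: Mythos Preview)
Your proof is correct and follows essentially the same strategy as the paper: induction on $m$ with Theorem~\ref{thm:OneMatchingMissing} as the base case, concatenating an outer decomposition with inner ones on each block. The only cosmetic difference is the order in which matchings are peeled off---the paper first applies the induction hypothesis to $M_1\cup\cdots\cup M_{m-1}$ to obtain a fractional $K_{Cq}^r$-decomposition of $K_n^r-(M_1\cup\cdots\cup M_{m-1})$ and then invokes Theorem~\ref{thm:OneMatchingMissing} on each block to handle $M_m$, whereas you first strip off $M_1$ via Theorem~\ref{thm:OneMatchingMissing} (with target size $C^{m-1}q$) and then apply induction inside each block; both orderings work and your write-up is in fact more explicit about the use of Lemma~\ref{prop:AlmostConcatenates}.
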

\begin{proof}
We use the same $C$ as in Theorem~\ref{thm:OneMatchingMissing}. We proceed by induction on $m$. For $m=1$, this is simply Theorem~\ref{thm:OneMatchingMissing}. Let $M:= \{M_1,\ldots, M_m\}$. Let $M':= \{M_1,\ldots ,M_{m-1}\}$. By induction, $K_n^r-M'$ has a fractional $K_{Cq}^r$-decomposition. But by Theorem~\ref{thm:OneMatchingMissing}, we find that for each $Q\in \mc{K}_{Cq}^r(K_n^r-M')$ we have that $Q\setminus M_m$ has a fractional $K_q^r$-decomposition. Hence $K_n^r-M$ has a fractional $K_q^r$-decomposition as desired.  
\end{proof}

\section{A Sampling Lemma and the Proof of Main Result}

\subsection{A Sampling Lemma}\label{sec:sampling}

This section is dedicated to the last main piece of the puzzle before proving~\cref{thm:Main}. We show that if $G$ is a dense enough hypergraph, then we can find an almost fractional $\mc{H}$-decomposition of $G$, where $\mc{H}$ is the family of all subgraphs of $G$ isomorphic to $K_k$ minus $m$ matchings, for some well-chosen parameters $k,m$. For this proof, we need the following proposition.

\begin{proposition}\label{prop:Exploration}
Let $J$ be an $r$-uniform hypergraph, and let $X\subseteq V(J)$ such that $d_J(v)\geq 1$ for every vertex $v\in V(J)\setminus X$. Let $m:= |V(J)\setminus X|$. Then there exists an ordering $v_1,\ldots, v_m$ of $V(J)\setminus X$ such that the number of $i\in [m]$ where $d_{J[X\cup \{v_j: j\in [i]\}]}(v_i)\ge 1$ is at least $\frac{1}{r} |V(J)\setminus X|$.
\end{proposition}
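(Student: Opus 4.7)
The plan is to construct the ordering via a greedy edge-exploration of $V(J)\setminus X$. Initialize $U_0 := V(J)\setminus X$, and while $U_{i-1}\neq \emptyset$, choose an edge $e_i\in E(J)$ meeting $U_{i-1}$ (such an edge exists since every vertex in $U_{i-1}\subseteq V(J)\setminus X$ has $d_J\ge 1$), and then set $U_i := U_{i-1}\setminus V(e_i)$. Writing $N_i := V(e_i)\cap U_{i-1}$ for the ``newly covered'' vertices at step $i$, the sets $N_1,\ldots,N_t$ partition $V(J)\setminus X$, each $N_i$ is non-empty by the choice of $e_i$, and $|N_i|\le |V(e_i)|=r$. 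Since $\sum_{i=1}^t |N_i|=m$ and each $|N_i|\le r$, we get $t\ge m/r$.

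Next, I build the ordering $v_1,\ldots,v_m$ by concatenating arbitrary orderings of $N_1,N_2,\ldots,N_t$ in that order; within each block $N_i$, I designate a single vertex $w_i\in N_i$ and insist that $w_i$ be placed last among $N_i$. Letting $p_i$ denote the position of $w_i$ in this ordering, we have $\{v_1,\ldots,v_{p_i}\}=N_1\cup\cdots\cup N_i$. The main claim is that each $w_i$ is a good index, i.e.\ $d_{J[X\cup\{v_j: j\in[p_i]\}]}(w_i)\ge 1$, witnessed by the edge $e_i$. To verify this it suffices to show $e_i\subseteq X\cup\{v_1,\ldots,v_{p_i}\}$. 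So fix $v\in V(e_i)\setminus X$: either $v\in U_{i-1}$, in which case $v\in N_i\subseteq\{v_1,\ldots,v_{p_i}\}$; or $v$ was already removed at some earlier step $j<i$, necessarily via $v\in V(e_j)$, in which case $v\in N_j\subseteq\{v_1,\ldots,v_{p_i}\}$. Either way $v$ appears at or before position $p_i$, so the claim holds and the $w_i$'s supply at least $t\ge m/r$ good indices, as desired.

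I do not anticipate any serious obstacle here; the argument is pure bookkeeping, and the factor $1/r$ comes for free from the fact that each edge covers at most $r$ new vertices per greedy step. The only subtle point is the order of operations — one must perform the edge-cover first and only then define the vertex ordering, pinning the ``closing'' vertex $w_i$ of each block to the rightmost position of its block, so that $e_i$ is fully absorbed into $X\cup\{v_1,\ldots,v_{p_i}\}$ exactly at the moment $w_i$ is processed.
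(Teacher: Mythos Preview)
Your proof is correct and takes essentially the same approach as the paper: a greedy edge-exploration where each chosen edge absorbs at most $r$ new vertices of $V(J)\setminus X$ and contributes one good index (the last vertex of its block). The paper phrases this as a maximality argument (take a largest $S$ with the $|S|/r$ property and extend by one edge to derive a contradiction), whereas you run the exploration to completion first and then read off the ordering, but the underlying mechanism is identical.
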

\begin{proof}
Let $S\subset V(J)\setminus X$ and $v_1,\ldots, v_{\ell}$ be an ordering of $S$ such that the number of $i\in [\ell]$ where $d_{J[X\cup \{v_j: j\in [i]\}]}(v_i)\ge 1$ is at least $\frac{1}{r} |S|$, and subject to that $|S|$ is maximized. Note that such a set is well-defined, as the empty set satisfies the conditions. If $S=V(J)\setminus X$, then the ordering is as desired. 

So we assume $V(J)\setminus (X\cup S)\ne \emptyset$. Let $U:= V(J)\setminus (X\cup S)$. Thus, there exists a vertex $u\in U$. As $d_J(u)\geq 1$, there exists a set $S'\subseteq U\setminus\{u\}$ with $|S'|\leq r-1$, such that $u$ has degree at least $1$ in $J[X\cup S \cup S']$. Let $v_{\ell+1},\ldots,v_{\ell+|S'|}$ be an ordering of $S'$ and let $v_{\ell+|S'|+1}:=u$; observe that, by construction, there exists an edge of $J$ containing $u$ whose other vertices precede $u$ in the ordering $v_1,\ldots, v_{\ell+|S'|+1}$. Let $S'':= S\cup S'$. It follows that the number of $i\in [\ell+|S'|+1]$ where $d_{J[X\cup \{v_j: j\in [i]\}]}(v_i)\ge 1$ is at least $1+\frac{1}{r} |S| \ge \frac{1}{r}|S''|$ and hence $S''$ contradicts the maximality of $S$.
\end{proof}

\begin{thm}\label{thm:AlmostBySampling}
For every integer $r \geq 2$ and $m,k\geq r$, let $G$ be an $r$-graph with $\delta(G)\ge (1-d)n$ where $d \leq \frac{1}{(2e^2k)^{r-1}}$. If $\mc{H}$ is the set of induced subgraphs $H$ of $G$ with exactly $k$ vertices such that $\Delta_1(\bar{H})\le m$, then $G$ has a $ k\cdot ((2\cdot e^2\cdot k)^{r-1}\cdot d)^{(m^{1/r-1}-r)/(r-1)}$-almost fractional $\mc{H}$-decomposition.
\end{thm}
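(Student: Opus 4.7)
The plan is to sample a uniformly random $k$-subset $S$ of $V(G)$ and define the weight function $\Phi$ proportionally to the resulting distribution on $\mc{H}$. Since every edge $e \in G$ has the same probability $p := \binom{n-r}{k-r}/\binom{n}{k}$ of being contained in $S$, setting $\Phi(H) := \Prob{S = H}/p$ for $H \in \mc{H}$ (and $\Phi(H) := 0$ otherwise) yields $\partial \Phi(e) = \Prob{S \in \mc{H} \mid e \subseteq S} \in [0,1]$. So it suffices to prove $\Prob{S \notin \mc{H} \mid e \subseteq S} \le k\alpha^T$, where $\alpha := (2e^2 k)^{r-1}d$ and $T := (m^{1/(r-1)} - r)/(r-1)$ (we may assume $T > 0$, else $k\alpha^T \ge 1$ and the claim is trivial).

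The event $S \notin \mc{H}$ occurs iff some $v \in S$ has more than $m$ non-edges of $G$ through it inside $S$. Writing $X_v$ for this count, a union bound over $v \in V(G)$ weighted by $\Prob{v \in S \mid e \subseteq S}$ (which sums to $k$) reduces the task to proving $\Prob{X_v > m \mid v \in S, e \subseteq S} \le \alpha^T$ for each fixed vertex $v$. Let $J_v$ denote the $(r-1)$-uniform hypergraph on $V \setminus v$ whose edges are precisely the non-edges of $G$ through $v$; then $X_v = |E(J_v[S \setminus v])|$.

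If $X_v > m$, the non-isolated vertex set $W$ of $J_v[S\setminus v]$ satisfies $|W| \ge m^{1/(r-1)}$, since any $(r-1)$-graph has at most $\binom{|V|}{r-1}$ edges. Applying \cref{prop:Exploration} to $J_v[W]$ with $X = \emptyset$ produces an ordering of $W$ in which at least $|W|/(r-1) \ge m^{1/(r-1)}/(r-1) = T + 1 + 1/(r-1)$ vertices are ``confirmed'' with witness edges whose remaining $r-2$ vertices precede them. Selecting the first $c := \lceil T + 1 \rceil$ such witness edges (which exist by the above count) yields distinct edges $e_1, \ldots, e_c \in J_v$ with distinguished ``new'' vertices $w_j \in e_j \setminus \bigcup_{j' < j} e_{j'}$, so $\Prob{X_v > m \mid v, e \subseteq S}$ is bounded by the probability that such a configuration has $\bigcup e_j \subseteq S \setminus v$. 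I would control the latter by summing over ``new-vertex profiles'' $(a_1, \ldots, a_c)$ with $a_1 = r-1$ and $a_j \in [1, r-1]$ for $j \ge 2$: iterating the identity $\sum_u \bar{\deg}_{j+1}(T \cup u) = (r-j)\bar{\deg}_j(T)$ from the hypothesis $\delta_{r-1}(G) \ge (1-d)n$ gives $\Delta_j(\bar G) \le dn^{r-j}/(r-j)!$, so each profile contributes at most $d^c k^{r-1 + \sum_{j \ge 2} a_j}$ times combinatorial factors $\prod_{j \ge 2} \binom{(j-1)(r-1)}{r-1-a_j}/(a_j-1)!$, after the $n^{a_j}$ codegrees cancel with the $(k/n)^{a_j}$ probabilities. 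Since $dk^{r-1} = \alpha/(2e^2)^{r-1}$ and $\alpha \le 1$, this collapses to at most $\alpha^{c-1} \le \alpha^T$.

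The principal obstacle is this profile accounting: one must verify that the combinatorial pre-factors---which can grow like $((c-1)!)^{r-2}$ for the all-tree profile (where $a_j = 1$ for $j \ge 2$)---are absorbed by the slack $(2e^2)^{(r-1)T}$ hidden inside $\alpha^T$. The crucial input is the trivial bound $c \le m^{1/(r-1)}/(r-1) + 1 \le k$ (using $m \le k^{r-1}$), which ensures $k^{(r-1)T}$ inside $\alpha^T$ dominates any such polynomial in $c$. The additional conditioning on $e \subseteq S$ (on top of $v \in S$) perturbs the various probabilities only by multiplicative factors $(1 + O(r/n))$ that are absorbed by the same slack and cause no further difficulty.
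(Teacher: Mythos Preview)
Your overall strategy---uniform $k$-set sampling, a union bound over vertices $v\in S$, and an appeal to \cref{prop:Exploration} on the link of $v$ in the complement---is the same as the paper's. The one substantive difference is that the paper enumerates \emph{all} $s$ vertices of $V(J)\setminus X$ (splitting them into ``good'' and ``non-good'' and bounding good ones by $\binom{s}{r-2}\,dn$), whereas you keep only $c$ witness edges and do a finer profile sum. Both routes can be made to work, but the paper's is considerably shorter because the single crude good/non-good split collapses directly to the closed form $((2e^2k)^{r-1}d)^{s/(r-1)}$ without any profile bookkeeping.

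There is, however, a genuine gap in your handling of the conditioning on $e\subseteq S$. You apply \cref{prop:Exploration} to $J_v[W]$ with $X=\emptyset$, so your witness edges $e_1,\dots,e_c$ may contain vertices of $V(e)$. Your final sentence asserts that the conditioning ``perturbs the various probabilities only by multiplicative factors $(1+O(r/n))$''---but this is true only for vertices \emph{outside} $V(e)$. A vertex $w\in V(e)$ lies in $S$ with conditional probability $1$, not $\approx k/n$, so the $n^{a_j}$ from the codegree bound no longer cancels against a $(k/n)^{a_j}$ from the inclusion probability. In the worst case (all of $V(e)\setminus\{v\}$ absorbed into $\bigcup e_j$) you lose a factor of $(n/k)^{r-1}$ or $(n/k)^r$, which is not at all absorbed by the $(2e^2)^{(r-1)T}$ slack and in fact blows up with $n$. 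The paper avoids this precisely by taking $X=V(f)\setminus\{u\}$ in the exploration: then every vertex that is being counted lies in $S\setminus V(f)$ and is therefore genuinely random. This is exactly why the exponent in the statement is $(m^{1/(r-1)}-r)/(r-1)$ rather than $m^{1/(r-1)}/(r-1)$: the $-r$ is the price paid for excluding the $\le r$ vertices of $V(e)$ from the exploration, not free slack that you can spend elsewhere. The fix is immediate---run your argument with $X=V(e)\setminus\{v\}$---but as written the proposal does not go through.
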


\begin{nonlateproof}{thm:AlmostBySampling}
For each $H\in \mc{H}$, we let $\phi(H):= \frac{1}{\binom{n-r}{k-r}}$. We claim that $\phi$ is the desired almost fractional decomposition. Note that $\phi(H)\ge 0$ for all $H\in \mc{H}$. Furthermore, we have that $\partial \phi(f) \le 1$ for every edge $f\in G$ since each edge is in at most $\binom{n-r}{k-r}$ elements of $\mc{H}$. Thus it remains to show that $\partial \phi(f) \ge 1 - k\cdot [(2\cdot e^2\cdot k)^{r-1}\cdot d]^{(m^{1/(r-1)}-r)/(r-1)}$ for each $f\in G$.

To that end, fix an edge $f\in G$. Choose uniformly at random a set $S\subseteq V(G)\setminus V(f)$ of size $k-r$, and let
\(H':=G[\,S\cup V(f)]\). Then $H'$ is a uniformly random induced $k$-vertex supergraph of $f$, and hence
\[
\partial \phi(f)=\Prob{H'\in \mc{H}}\ =\ \Prob{\Delta_1(\bar{H'})\leq m} \geq 1-\sum_{u\in S\cup V(f)}\Prob{d_{\bar{H'}}(u)>m},\]
by the union bound. Thus, it suffices to prove the following.

\begin{claim}\label{cl:Prob3}
For every vertex $u\in S\cup V(f)$,
\[\Prob{d_{\bar{H'}}(u) > m}\leq \big[(2\cdot e^2\cdot k)^{r-1}\cdot d\big]^{(m^{1/(r-1)}-r)/(r-1)}.\]
\end{claim}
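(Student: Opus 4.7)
I will construct an auxiliary hypergraph, use Proposition~\ref{prop:Exploration} to extract combinatorial ``witnesses,'' and bound their expected count via a moment-style union bound combined with the codegree hypothesis.

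Let $J_u$ be the $(r-1)$-uniform hypergraph on $V(G)\setminus\{u\}$ whose edges are the $(r-1)$-sets $T$ with $T\cup\{u\}\notin E(G)$. The codegree hypothesis $\delta(G)\ge(1-d)n$ translates to $\Delta_{r-2}(J_u)\le dn$, and $d_{\bar{H'}}(u)$ equals the number of edges of $J_u$ inside $W:=V(H')\setminus\{u\}$. After conditioning on the role of $u$, the set $W$ consists of at most $r$ ``fixed'' vertices from $V(f)$ together with a uniform subset $S$ of $V_{\mathrm{random}}:=V(G)\setminus V(f)$ of size roughly $k-r$. Let $X_0$ count the ``fully random'' edges of $J_u$ (those contained in $V_{\mathrm{random}}$) that lie inside $S$; the generalized codegree bound $d_{J_u}(F)\le dn^{r-1-|F|}/(r-1-|F|)!$ (valid for $|F|\le r-2$) gives that the number of edges with $|e\cap V(f)|\ge 1$ and $e\subseteq W$ has expectation $O_r(d k^{r-2})=O_r(1/k)$, so by Markov this contribution is a harmless error, and it suffices to bound $\Prob{X_0>m}$.

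Suppose $X_0>m$. Let $J^{(0)}_u$ be the restriction of $J_u$ to $V_{\mathrm{random}}$, still $(r-1)$-uniform with $\Delta_{r-2}(J^{(0)}_u)\le dn$. Then $J^{(0)}_u[S]$ has more than $m$ edges, so its non-isolated vertex set $V^*$ satisfies $\binom{|V^*|}{r-1}>m$, hence $|V^*|\ge m^{1/(r-1)}$. Applying Proposition~\ref{prop:Exploration} to $J^{(0)}_u[V^*]$ with $X=\emptyset$ yields an ordering of $V^*$ in which at least $|V^*|/(r-1)$ of the vertices are each the maximal-indexed vertex of some edge of $J^{(0)}_u[V^*]$; picking one such edge per exploration vertex produces at least $\lceil m^{1/(r-1)}/(r-1)\rceil$ distinct edges $e_1,\ldots,e_{t_0}$. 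Setting $t:=\lceil(m^{1/(r-1)}-r)/(r-1)\rceil+1\le t_0$, we get a length-$t$ \emph{exploration witness} whose total vertex set $V^{**}\subseteq V_{\mathrm{random}}$ satisfies $|V^{**}|\ge r+t-2$ (since each subsequent edge contributes its distinct maximal vertex), and the event $\{X_0>m\}$ is contained in the event that such a witness lies entirely in $S$.

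I then bound the expected number of length-$t$ exploration witnesses in $S$ via a union bound, partitioning by the ``signature'' $(a_2,\ldots,a_t)\in[1,r-1]^{t-1}$ where $a_i$ is the number of new vertices added by $e_i$, so $|V^{**}|=r-1+\sum a_i$. Using the generalized codegree bound, the number of witnesses with a given signature is at most $d^t\,n^{|V^{**}|}/(r-1)!\cdot\prod_{i=2}^t\binom{|V^{i-1}|}{r-1-a_i}/a_i!$, and each survives in $S$ with probability at most $(k/n)^{|V^{**}|}$. The factors of $n$ cancel, $|V^{i-1}|\le k$, and summing over signatures using $\sum_{a=1}^{r-1}1/((r-1-a)!\,a!)\le 2^{r-1}/(r-1)!$ telescopes to
\[
\Prob{X_0>m}\;\le\;\frac{dk^{r-1}}{(r-1)!}\cdot\Bigl[\frac{2^{r-1}\,dk^{r-1}}{(r-1)!}\Bigr]^{t-1}\;\le\;\bigl[(2e^2k)^{r-1}d\bigr]^{t-1},
\]
using $d\le 1/(2e^2k)^{r-1}$ and $2^{r-1}/(r-1)!\le (2e^2)^{r-1}$. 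Since $t-1\ge(m^{1/(r-1)}-r)/(r-1)$ and $\lambda:=(2e^2k)^{r-1}d\le 1$, the claim follows (the $O_r(1/k)$ error from $X_{\ge1}$ is absorbed into the generous $(2e^2)^{r-1}$ constant).

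\textbf{Main obstacle.} The chief difficulty is the combinatorial bookkeeping of the moment bound: tracking how codegree restrictions interact with the exploration structure across all signatures and absorbing the binomial and factorial constants cleanly into $(2e^2)^{r-1}$. A secondary technical nuisance is the conditioning $V(f)\subseteq V(H')$, which I handle by splitting $J_u$-edges according to how many of their vertices lie in the fixed part of $W$ and controlling the (very small) non-fully-random contribution by a generalized-codegree Markov bound.
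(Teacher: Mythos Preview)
Your overall plan---build the link $(r-1)$-graph, invoke Proposition~\ref{prop:Exploration} to produce an exploration structure, and run a union/moment bound over the resulting witnesses using the codegree hypothesis---is exactly the paper's strategy, and your witness-by-signature calculation is a valid (if more elaborate) rephrasing of the paper's direct count of $(k-r)$-sets $S$.

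There is, however, a genuine gap in how you deal with the fixed vertices $V(f)$. You reduce to bounding $\Prob{X_0>m}$ by controlling the ``mixed'' contribution $X_{\ge 1}$ via Markov, obtaining only $\Prob{X_{\ge 1}\ge 1}\le O_r(dk^{r-2})$. This additive error is \emph{not} absorbed by the target bound: writing $\lambda:=(2e^2k)^{r-1}d$, your Markov term is of order $\lambda/k$, which is linear in $d$, whereas the claim requires a bound of order $\lambda^{(m^{1/(r-1)}-r)/(r-1)}$. As soon as $m>(2r-1)^{r-1}$ (so the exponent exceeds $1$) and $d$ is small, the Markov error dominates and the stated inequality fails. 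Since the claim must hold for all $m\ge r$, this is a real obstruction, not a constant to be absorbed.

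The fix is precisely the one the paper uses and the reason the $-r$ appears in the exponent: do not split off $X_{\ge 1}$ at all. Instead, apply Proposition~\ref{prop:Exploration} to the full link graph $J_u[W]$ with $X=V(f)\setminus\{u\}$ rather than $X=\emptyset$. Then every exploration vertex $v_1,\ldots,v_s$ lies in the random set $S$, the number $s$ of such vertices is at least $m^{1/(r-1)}-|X|\ge m^{1/(r-1)}-r$, and your witness count goes through verbatim with the old-vertex choices now drawn from $X\cup\{v_j:j<i\}$ (still of size $O(s)$, so the same arithmetic applies). This yields the claimed bound directly, with no additive error term.
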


\begin{proofclaim}
Fix $u\in S\cup V(f)$, and let $X=V(f)\setminus\{u\}$, hence $|X|=r$ or $|X|=r-1$. Let $J$ be the link graph of $\bar{H}'$ at the vertex $u$, keeping only vertices incident with $u$, that is
\[ V(J)=\{w\in S\cup V(f)\colon~\exists\ e\in \bar{H}',~\{u,w\}\subseteq e\},\qquad
E(J)=\{e\setminus\{u\}\colon e\in \bar{H}',~u\in e\}.
\]
Let $s:=|V(J)\setminus X|$. Observe that $J$ is an $(r-1)$-uniform hypergraph with $d_{\bar{H'}}(u)$ edges, therefore, if $d_{\bar{H'}}(u)>m$, $J$ contains at least $m^{1/(r-1)}$ vertices, and $s\geq m^{1/(r-1)} - r$.

By definition, every vertex $v\in V(J)\setminus X$ satisfies $d_J(v)\geq 1$. Therefore, by~\cref{prop:Exploration}, there exists an ordering $v_1,\ldots,v_s$ of $V(J)\setminus X$ such that the number of $i\in [m]$ where $d_{J[X\cup \{v_j: j\in [i]\}]}(v_i)\ge 1$ is at least $\frac{s}{r-1}$. Call these indices {\em good}.

We count the number of $(k-r)$ sets $S$ that contain such an ordering. For every good index, there are at most $\binom{s}{r-2}dn$ possible vertices, (recall that $\Delta(\bar{G})\leq dn$, and there are at most $\binom{s}{r-2}$ preceding sets of $r-2$ vertices), while for the other indices, there are at most $\binom{n-r}{s-s/(r-1)}$ choices of vertices. Finally, there are at most $\binom{n-r-s}{k-r-s}$ choices of vertices for elements in $S\setminus V(J)$. Therefore, we obtain that the number of $(k-r)$ sets $S$ containing such an ordering is at most

\[N(s):= \binom{\binom{s}{r-2}dn}{\frac{s}{r-1}}\cdot  \binom{n-r}{s-\frac{s}{r-1}}\cdot \binom{n-r-s}{k-r-s},\]
and
\[\mathcal{P}:=\Prob{d_{\bar{H'}}(u) > m}\leq N(s)/\binom{n-r}{k-r},\text{ with }s\geq m^{1/(r-1)}-r.\]
Observe that $s-\frac{s}{r-1}=\frac{s(r-2)}{r-1}$ and that,
\begin{align*}
  \binom{n-r-s}{k-r-s}/\binom{n-r}{k-r}=\frac{(n-r-s)!}{(n-k)!\cdot(k-r-s)!}\cdot\frac{(n-k)!\cdot(k-r)!}{(n-r)!}
  =\frac{(k-r)\ldots(k-r-s+1)}{(n-r)\ldots(n-r-s+1)}\leq \Big(\frac{k}{n}\Big)^s,  
\end{align*}
hence we deduce
\[\mc{P}\leq k^s\cdot n^{-s}\cdot\brackets*{ \binom{\binom{s}{r-2}dn}{\frac{s}{r-1}}\cdot \binom{n-r}{s-\frac{s}{r-1}}}.\]
If $r=2$, we obtain that $\mc{P}\leq k^s\cdot n^{-s}\cdot \binom{dn}{s}\leq k^s\cdot d^{s}$, as desired. Assume now that $r\geq 3$. Using the standard bound $\binom{n}{k}\leq\big(\frac{en}{k}\big)^k$, and $s-\frac{s}{r-1}=\frac{s(r-2)}{r-1}$, we obtain
\[\mc{P}\leq k^s\cdot d^{s/(r-1)}\cdot\brackets*{\Big(\frac{e\cdot (r-1)\cdot\binom{s}{r-2}}{s}\Big)^{s/(r-1)} \cdot \left(\frac{e\cdot(r-1)}{s\cdot(r-2)}\right)^{s(r-2)/(r-1)}},\]
and 
\[\mc{P}\leq k^s\cdot d^{s/(r-1)}\cdot\brackets*{\frac{e^{2r-3}\cdot (r-1)^{r-1}}{s\cdot (r-2)^{2r-4}}}^{s/(r-1)}.\]
As $s>r>2$, we have $\frac{2r-3}{r-1}\leq 2$ and $\frac{2r-4}{r-1}\geq1$, hence
\[\mc{P}
\leq k^s\cdot d^{s/(r-1)}\cdot
  \brackets*{e^{\frac{2r-3}{r-1}}\cdot \frac{r-1}{(r-2)^{\frac{2r-4}{r-1}}}\cdot\frac{1}{s^{1/(r-1)}}}^{s}
\leq k^s\cdot d^{s/(r-1)} \cdot \brackets*{e^2\cdot \frac{r-1}{r-2}}^s
\leq (2\cdot e^2\cdot k)^s\cdot d^{s/(r-1)}.\]
As $d\leq \frac{1}{(2e^2k)^{r-1}}$ and $s\geq m^{1/(r-1)}-r$, we obtain
\[\mc{P}\leq \brackets*{(2\cdot e^2\cdot k)^{r-1}\cdot d}^{(m^{1/(r-1)}-r)/(r-1)},\]
as desired.
\end{proofclaim}
By the union bound, we obtain that, for each $f\in G$,
\[
\partial \phi(f) \geq 1-\sum_{u\in S\cup V(f)}\Prob{d_{\bar{H'}}(u)>m}
\ge 1 - k\cdot [(2\cdot e^2\cdot k)^{r-1}\cdot d]^{(m^{1/(r-1)}-r)/(r-1)},\]
hence $\phi$ is the desired almost fractional $\mc{H}$-decomposition of $G$.
\end{nonlateproof}

\subsection{Proof of Main Result}\label{sec:MainProof}

We are now ready to prove Theorem~\ref{thm:Main}. 

\begin{proof}[Proof of Theorem~\ref{thm:Main}]
Let $C$ be as in Corollary~\ref{cor:MissingManyMatchings}. Let $m=m(r,\varepsilon)$ be an integer such that $m>\big(r+\frac{r^2-1}{\varepsilon}\big)^{r-1}$, and observe that it implies $m^{1/(r-1)}-r\geq (r^2-1)/\varepsilon > r-1$. 

Let $\alpha:=(2\cdot e^2\cdot \beta \cdot C^m\cdot r)^{-(r-1)}$ for some $\beta>1$ to be defined later. Let $k:=C^m\cdot rq$ and $d:=\frac{\alpha}{q^{r-1+\varepsilon}}$. Let $\mc{H}$ be the set of induced subgraphs $H$ of $G$ with exactly $k$ vertices such that $\Delta_1(\bar{H})\le m$. Observe that 
\[d\cdot (2\cdot e^2\cdot k)^{r-1}=\frac{1}{q^{r-1+\varepsilon}}\cdot \frac{1}{(2\cdot e^2\cdot\beta\cdot C^m\cdot r)^{r-1}}\cdot(2\cdot e^2\cdot C^m\cdot rq)^{r-1}=\beta^{-(r-1)}\cdot q^{-\varepsilon}<1.\]
Therefore, by~\cref{thm:AlmostBySampling}, $G$ has a $k\cdot ((2\cdot e^2\cdot k)^{r-1}\cdot d)^{(m^{1/(r-1)}-r)/(r-1)}$-almost fractional $\mc{H}$-decomposition. Observe that
\[k\cdot ((2\cdot e^2\cdot k)^{r-1}\cdot d)^{(m^{1/(r-1)}-r)/(r-1)}=\frac{C^m\cdot r}{\beta^{m^{1/(r-1)}-r}}\cdot q^{1-\varepsilon(m^{1/(r-1)}-r)/(r-1)}.\]
By our choice of $m$ we have \(\varepsilon\cdot(m^{1/(r-1)}-r)/(r-1)>r+1\), hence
\[k\cdot ((2\cdot e^2\cdot k)^{r-1}\cdot d)^{(m^{1/(r-1)}-r)/(r-1)}\leq \frac{C^m\cdot r}{\beta^{m^{1/(r-1)}-r}}\cdot q^{-r}
\leq e^{-r}\cdot q^{-r}\leq\frac{1}{\binom{rq}{r}},\]
where we used the fact that $\beta$ is large enough with respect to $r$ and $m$. Therefore $G$ has a $\frac{1}{\binom{rq}{r}}$-almost fractional $\mc{H}$-decomposition. By~\cref{cor:MissingManyMatchings}, each $H\in \mc{H}$ has a fractional $K_{rq}^r$-decomposition. By~\cref{prop:AlmostConcatenates}, we can concatenate these decompositions, and we obtain that there exists a $\frac{1}{\binom{rq}{r}}$-almost fractional $K_{rq}^r$-decomposition of $G$. By~\cref{cor:AlmostToFull}, this implies the existence of a fractional $K_q^r$-decomposition of $G$, as desired.
\end{proof}

\bibliographystyle{plain}
\bibliography{bibliography}

\end{document}